\documentclass{amsart}

\RequirePackage{amsmath}
\RequirePackage{amsthm}
\RequirePackage{amsfonts}
\RequirePackage{amssymb}
\RequirePackage{multirow}
\RequirePackage{multicol}
\RequirePackage{graphicx}
\RequirePackage{float}
\RequirePackage{enumerate} 
\RequirePackage{cancel}
\RequirePackage{appendix}
\RequirePackage{calrsfs}
\RequirePackage{caption}
\RequirePackage{xcolor}
\RequirePackage{hyperref}

\RequirePackage{tikz}
\usetikzlibrary{babel}
\usetikzlibrary{shapes,arrows}
\tikzstyle{block} = [draw, fill=blue!20, rectangle, 
    minimum height=3em, minimum width=3em]
\tikzstyle{sum} = [draw, fill=blue!20, circle, node distance=1cm]
\tikzstyle{input} = [coordinate]
\tikzstyle{output} = [coordinate]

\newtheorem{theorem}{Theorem}[section]
\newtheorem{proposition}[theorem]{Proposition}
\newtheorem{corollary}[theorem]{Corollary}
\newtheorem{lemma}[theorem]{Lemma}

\theoremstyle{definition}

\newtheorem{definition}[theorem]{Definition}

\newtheorem{remark}[theorem]{Remark}

\newcommand{\C}{\mathbb{C}}

\newcommand{\R}{\mathbb{R}}
\newcommand{\D}{\mathbb{D}}
\newcommand{\N}{\mathbb{N}}
\newcommand{\Z}{\mathbb{Z}}

\newcommand{\abs}[1]{\left| #1 \right|}

\title[The Bergman and the growth numbers of domains]{The Bergman and the growth numbers of domains and hyperbolic geometry}
\subjclass[2020]{Primary 30H20, 30H30; Secondary 30H10, 30H99, 30F45, 30F35}
\keywords{Bergman number; growth number; Bloch number; hyperbolic domain; hyperbolic distance}
\date{\today}
\thanks{This research was supported in part by Ministerio de Innovaci\'on y Ciencia, Spain, project PID2022-136320NB-I00. The second author was supported by Ministerio de Universidades, Spain, through the action Ayuda del Programa de Formaci\'on de Profesorado Universitario, reference FPU21/00258. The authors thanks IMUS-Mar\'ia de Maeztu grant CEX2024-001517-M - Apoyo a Unidades de Excelencia Mar\'ia de Maeztu for supporting this research, funded by MICIU/AEI/ 10.13039/501100011033}
\author[M. D. Contreras]{Manuel D. Contreras}
\address{Departamento de Matem\'atica Aplicada II and IMUS, Escuela T\'ecnica Superior de Ingenier\'ia, Universidad de Sevilla, Camino de los Descubrimientos, s/n 41092, Sevilla, Spain}
\email{contreras@us.es}
\author[F. J. Cruz-Zamorano]{Francisco J. Cruz-Zamorano}
\address{Departamento de An\'alisis Matem\'atico, Facultad de Ciencias, Universidad de La Laguna, Avenida Astrof\'isico Francisco S\'anchez S/N, 38206 San Crist\'obal de La Laguna, Santa Cruz de Tenerife, Spain}
\email{fcruzzam@ull.edu.es}
\author[L. Rodr\'iguez-Piazza]{Luis Rodr\'iguez-Piazza}
\address{Departamento de An\'alisis Matem\'atico and IMUS, Facultad de Matem\'aticas, Universidad de Sevilla, Calle Tarfia, s/n 41012 Sevilla, Spain}
\email{piazza@us.es}

\DeclareMathOperator{\h}{h}
\let\b\relax
\DeclareMathOperator{\b}{b}
\DeclareMathOperator{\bloch}{Bloch}
\DeclareMathOperator{\g}{g}

\newcommand{\fstbox}[2]{
\draw (#1,---#2) -- (---#2,---#2);
\draw (---#2,---#2) -- (---#2,#2);
\draw (---#2,#2) -- (#1,#2);
}

\newcommand{\xbox}[4]{
\draw (#1,#3) -- (#1,#2*0.5-#1*0.5);
\draw (#1,#2*0.5-#1*0.5) -- (#2,#2*0.5-#1*0.5);
\draw (#2,#2*0.5-#1*0.5) -- (#2,#4);
\draw (#1,---#3) -- (#1,#1*0.5-#2*0.5);
\draw (#1,#1*0.5-#2*0.5) -- (#2,#1*0.5-#2*0.5);
\draw (#2,#1*0.5-#2*0.5) -- (#2,---#4);
}

\newcommand{\xbanda}[3]{
\draw (#1,#3) -- (#2,#3);
\draw (#1,---#3) -- (#2,---#3);
}

\begin{document}

\begin{abstract}
In this article we completely characterize the Bergman number of general domains. Namely, we prove that the Bergman number of a hyperbolic unbounded domain can be calculated in terms of the asymptotic behavior of its hyperbolic metric near  infinity. To obtain the proof of this result we first work in the setting of growth spaces, defining the growth number of a domain. Later, we prove an equality relating the Bergman number and the growth number of domains. We also provide examples of domains with prescribed Bergman number which have zero Hardy number, solving a question posed previously by Betsakos and Cruz-Zamorano. At the end, a similar idea is treated for the case of Bloch-type spaces.
\end{abstract}

\maketitle

\section{Introduction}
Let $X$ be a space of holomorphic functions on the unit disk $\D$, and let $\Omega \subset \C$ be a domain (i.e., an open and connected subset of the complex plane). A long-standing question involves deciding whether all holomorphic maps on $\D$ with values on $\Omega$ lie in $X$ (i.e., whether $\mathrm{Hol}(\D,\Omega) \subset X$). This problem has been fruitfully studied over the last century, particularizing in the case when $X$ is the Nevanlinna class (see \cite[Theorems 5.1.1 and 5.4.2, pp. 209 and 211]{Nevanlinna}), the Smirnov class \cite{AhernCohn}, the space of holomorphic maps with bounded mean oscillation BMOA \cite{HaymanPommerenke}, the Bloch space (see \cite{Cima}), or a Hardy space \cite{CCZKRP,Essen,Hansen,K-HypDist,K-Comb1,K-Comb2,KimSugawa}. For these cases, the known characterizations usually depend on different geometric or potential-theoretic attributes of the domain $\Omega$.

In this  paper we will focus on the case of Bergman spaces, which has been recently studied in \cite{BCZ,Karafyllia,KK}. More precisely, we will discuss how to characterize the Bergman number of a domain using the hyperbolic distance.

Our ideas start with the so-called growth spaces. A first exhaustive study about these spaces first appeared in \cite{SW}. The state of the art about this topic can be found in the survey \cite{Bonet-Survey}. A growth space $\mathrm{H}^{\infty}_v$ is the collection of all holomorphic maps $f \colon \D \to \C$ such that
$$\sup_{z \in \D}\abs{f(z)}v(z) < +\infty,$$
where $v \colon \D \to [0,+\infty)$ is a weight (i.e., a non-negative function). In the case that $\Omega \subset \C$ is a hyperbolic domain, we will characterize whether $\mathrm{Hol}(\D,\Omega) \subset \mathrm{H}^{\infty}_v$ for a class of typical weights $v$ (the terminology comes from \cite[p. 102]{BDLT}); see Theorem \ref{thm:univconmap-Hinftyv}.

We will use a relation between the Bergman spaces and the growth spaces $\mathrm{H}^{\infty}_{v_{\alpha}}$, where $v_{\alpha}(z) = (1-\abs{z})^{\alpha}$, $z \in \D$, for some $\alpha \geq 0$. Notice that the usual Banach space $\mathrm{H}^{\infty}$ of bounded holomorphic functions in $\D$ coincides with $\mathrm{H}^{\infty}_{v_0}$.

It is straightforward that $\mathrm{H}^{\infty}_{v_{\alpha}} \subset \mathrm{H}^{\infty}_{v_{\beta}}$ if and only if $\alpha \leq \beta$. That is, the growth spaces $\mathrm{H}^{\infty}_{v_{\alpha}}$, $\alpha \geq 0$, form an increasing chain of spaces. Motivated by the definition of the Hardy number of a domain (see \cite[p. 292]{KimSugawa}, for instance), we define the growth number of a domain $\Omega \subset \C$ as
$$\g(\Omega) = \inf (\{\alpha > 0 : \mathrm{Hol}(\D,\Omega) \subset \mathrm{H}^{\infty}_{v_{\alpha}}\}),$$
where we use the convention that $\inf(\emptyset) = +\infty$.

As usual, the growth number of domains with a simple geometry (such as angular sectors or strips) is easy to calculate (more information follows in Section \ref{sec:growth-number}). The interesting cases are covered by the following complete characterization in terms of the hyperbolic distance (some notations and complementary information can be found in Section \ref{sec:preliminaries}).
\begin{theorem}
\label{thm:growth-number}
Let $\Omega \subset \C$ be an unbounded hyperbolic domain with $0 \in \Omega$. Then
$$\g(\Omega) = \limsup_{\substack{w \to \infty \\ w \in \Omega}}\dfrac{\log(\abs{w})}{d_{\Omega}(0,w)} = \limsup_{R \to +\infty}\dfrac{\log(R)}{d_{\Omega}(0,F_R)},$$
where $F_R = \{w \in \Omega : \abs{w} = R\}$, $R > 0$.
\end{theorem}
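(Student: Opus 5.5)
Write $L$ for the first $\limsup$ and $L'$ for the second. The plan is to prove $\g(\Omega)\le L$ and $\g(\Omega)\ge L$ separately, and then check $L=L'$. The only tool is the relation between the hyperbolic distance of $\D$ and $\log\frac{1}{1-\abs{z}}$. Conventions: hyperbolic distances have curvature $-1$, so $d_{\D}(0,z)=\frac12\log\frac{1+\abs{z}}{1-\abs{z}}$. Any other normalization only rescales $d_\Omega$, and the constants below adjust accordingly.

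\emph{Equality $L=L'$.} Since $d_\Omega(0,F_R)=\inf_{w\in F_R}d_\Omega(0,w)$, we have
$$\sup_{w\in F_R}\frac{\log\abs{w}}{d_\Omega(0,w)}=\frac{\log R}{d_\Omega(0,F_R)}.$$
Taking $\limsup$ as $R\to+\infty$ gives $L=L'$. The only things to note are that $F_R\neq\emptyset$ for all large $R$ (because $\Omega$ is connected and unbounded), and that the infimum need not be attained, which the supremum formulation absorbs.

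\emph{Upper bound $\g(\Omega)\le L$.} We may assume $L<\infty$. Fix $\alpha>L$ and $f\in\mathrm{Hol}(\D,\Omega)$. By the Schwarz--Pick lemma,
$$d_\Omega(0,f(z))\le d_\Omega(0,f(0))+d_\Omega(f(0),f(z))\le C_f+d_{\D}(0,z)\le C_f+\tfrac12\log\tfrac{2}{1-\abs{z}}.$$
Choose $\alpha'$ with $L<\alpha'<\alpha$. The definition of $L$ gives $R_0$ such that $\log\abs{w}\le \alpha' d_\Omega(0,w)$ whenever $w\in\Omega$ and $\abs{w}\ge R_0$. Hence, when $\abs{f(z)}\ge R_0$,
$$\abs{f(z)}\le C\,(1-\abs{z})^{-\alpha'/2}.$$
This puts $f\in \mathrm{H}^\infty_{v_{\alpha'/2}}$; the factor $\tfrac12$ from the metric normalization is exactly what the stated formula absorbs, whatever convention the paper's Section~\ref{sec:preliminaries} fixes. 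Letting $\alpha'\downarrow L$ gives the upper bound.

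\emph{Lower bound $\g(\Omega)\ge L$.} This is the main obstacle, because it needs a single holomorphic map that attains large growth. Take a universal covering $\pi\colon\D\to\Omega$ with $\pi(0)=0$; this is the natural extremal map, and presumably what Theorem~\ref{thm:univconmap-Hinftyv} encodes. The key property is that for every $w\in\Omega$ there is a lift $z_w\in\pi^{-1}(w)$ with
$$d_{\D}(0,z_w)=d_\Omega(0,w).$$
Now suppose $\pi\in\mathrm{H}^\infty_{v_\beta}$. Then
$$\log\abs{w}\le C+\beta\log\frac{1}{1-\abs{z_w}}\le C'+2\beta\, d_\Omega(0,w).$$
Taking a sequence $w_n\to\infty$ that realizes $L$, and using $d_\Omega(0,w_n)\to\infty$, yields $L\le$ (normalized) $\beta$. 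Infimizing over all admissible $\beta$ gives $\g(\Omega)\ge L$.

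Two points need care. First, $d_\Omega(0,w)\to\infty$ as $w\to\infty$: this holds because bounded hyperbolic balls have compact closure in $\Omega$. Second, if $\pi$ lies in no growth space, the inequality $\g(\Omega)\ge L$ is trivial, and the case $L=\infty$ is covered by the same argument. The only subtle step is making sure the normalization constants of the two inequalities match, so that both bounds give exactly $L$. I would fix the metric convention of the preliminaries first, so that $d_{\D}(0,z)\sim\log\frac{1}{1-\abs{z}}$ holds with constant one.
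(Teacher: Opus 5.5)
Your proof is correct, and it reaches the result by a more direct route than the paper. The paper first proves the general characterization of Theorem~\ref{thm:univconmap-Hinftyv} for every typical weight with all composition operators bounded. There, passing from $f_\Omega$ to an arbitrary $f=f_\Omega\circ g$ uses boundedness of $C_g$ (Theorem~\ref{thm:continuity-Cg-Hinftyv}, applied via $v_\alpha=\widetilde{v_\alpha}$). Identifying $\sup_{\D}\abs{f_\Omega}v$ with a supremum over $\Omega$ uses the Dirichlet polygon centred at $0$. Corollary~\ref{cor:va-hypdist} then gives $\g(\Omega)=\inf\{\alpha>0:\sup_{w\in\Omega}\abs{w}e^{-\alpha d_\Omega(0,w)}<\infty\}$, and a separate elementary $\limsup$ argument shows this infimum equals $L$.

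You avoid both ingredients. For the upper bound you apply Schwarz--Pick and the triangle inequality to each $f$ directly. Because $v_\alpha$ is a power, the additive error $d_\Omega(0,f(0))$ costs only the multiplicative constant $e^{\alpha' d_\Omega(0,f(0))}$; in the general setting this job is done by the composition-operator hypothesis. For the lower bound you only need one lift $z_w$ realizing $d_\Omega(0,w)$ (attainment of the infimum in \eqref{eq:domega-inf}). That is, you only need the easy inequality $\sup_{w}\abs{w}\,v_\beta(\cdots)\le\sup_{\D}\abs{\pi}\,v_\beta$, so no fundamental domain is required.

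Your route is shorter, self-contained for power weights, and merges the characterization with the $\limsup$ computation. The paper's route is heavier but yields Theorems~\ref{thm:univconmap-Hinftyv} and~\ref{thm:H0v-limsup0} for arbitrary typical weights. The paper reuses these for the logarithmic weight in Section~\ref{sec:final-example} and for the Bloch-type spaces.

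One correction on normalization. The choice $d_{\D}(0,z)=\frac12\log\frac{1+\abs{z}}{1-\abs{z}}$ is the curvature $-4$ convention, not $-1$. Moreover, the identity is not convention-free: with that choice both $\limsup$'s equal $2\g(\Omega)$, so the factor $\frac12$ is not ``absorbed''. The paper's \eqref{eq:hypdist-D} (density $2/(1-\abs{z}^2)$, curvature $-1$) gives $\log\frac{1}{1-\abs{z}}\le d_{\D}(0,z)\le\log\frac{2}{1-\abs{z}}$. With it, your estimates read $\abs{f(z)}\le C(1-\abs{z})^{-\alpha'}$ and $\log\abs{w}\le C+\beta\,d_\Omega(0,w)$. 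These give exactly $\g(\Omega)\le L$ and $\g(\Omega)\ge L$, as your last sentence anticipates. State this convention at the start rather than hedging on it.
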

Notice that $\g(\Omega) = \g(a\Omega+b)$ for all $a,b \in \C$, $a \neq 0$. Therefore, the assumption that $0 \in \Omega$ in the previous result is not crucial. In fact, we will consider such normalization in the rest of the paper.

The latter result is intimately linked to the case of Bergman spaces. To elaborate on this, let us recall that, for $p > 0$ and $\alpha > -1$, the (weighted) Bergman space $A^p_{\alpha}$ stands for the set of all holomorphic maps $f$ on $\D$ such that
$$\int_{\D}(1-\abs{z}^2)^{\alpha}\abs{f(z)}^pdA(z) < +\infty.$$
Notice that $A^p = A^p_0$ is the classical Bergman space.

Building on \cite{KK}, Karafyllia introduced the Bergman number of a domain in \cite{Karafyllia}. Namely, for a domain $\Omega \subset \C$, she defined
$$\b(\Omega) = \inf(\{\b(f) : f \in \mathrm{Hol}(\D,\Omega)\}),$$
where
$$\b(f) = \sup\left(\left\lbrace \dfrac{p}{\alpha+2} : p > 0, \, \alpha > -1, \, f \in A^p_{\alpha} \right\rbrace\right)$$
and again we use the convention that $\inf(\emptyset) = +\infty$ and $\sup(\emptyset) = 0$. 


Karafyllia and Karamanlis \cite{KK} proved that the Bergman number and the Hardy number of simply connected domains coincide. Such equality does not hold for general domains, as seen in \cite{BCZ}. Our main result deals with a similar relation with the growth number.

\begin{theorem}
\label{thm:bergman-growth}
Let $\Omega \subset \C$ be a domain. Then,
\begin{enumerate}[\hspace{0.5cm}\rm(a)]
\item $\g(\Omega) = 0$ if and only if $\b(\Omega) = +\infty$.
\item $\g(\Omega) = +\infty$ if and only if $\b(\Omega) = 0$.
\item If $0 < \g(\Omega) < +\infty$, then $\g(\Omega)\b(\Omega) = 1$.
\end{enumerate}
In particular, if $\Omega$ is hyperbolic, unbounded, and $0 \in \Omega$, then
$$\b(\Omega) = \liminf_{R \to +\infty}\dfrac{d_{\Omega}(0,F_R)}{\log(R)},$$
where $F_R = \{w \in \Omega : \abs{w} = R\}$, $R > 0$.
\end{theorem}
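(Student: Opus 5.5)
The plan is to show the two inequalities $\b(\Omega)\ge 1/\g(\Omega)$ and $\b(\Omega)\le 1/\g(\Omega)$, with the conventions $1/0=+\infty$ and $1/(+\infty)=0$. Parts (a)–(c) then follow at once, and the final formula is obtained by substituting Theorem \ref{thm:growth-number}, since $1/\limsup_{R\to+\infty} \log R/d_\Omega(0,F_R) = \liminf_{R\to+\infty} d_\Omega(0,F_R)/\log R$. Degenerate cases come first. If $\Omega$ is bounded, then every $f\in\mathrm{Hol}(\D,\Omega)$ is bounded, so $\g(\Omega)=0$ and $\b(f)=+\infty$. If $\Omega$ is not hyperbolic, then $\C\setminus\Omega$ has at most one point, and I would show $\g=+\infty$ and $\b=0$. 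For this I take universal-covering-type maps into $\C\setminus\{a\}$ such as $f(z)=a+\exp\bigl(((1+z)/(1-z))^{k}\bigr)$ with suitable $k$ (or any map with very fast radial growth). These lie in no $\mathrm{H}^\infty_{v_\alpha}$ and in no $A^p_\alpha$. So from now on $\Omega$ is hyperbolic and unbounded.

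\textbf{Lower bound $\b\ge 1/\g$.} Suppose $\g(\Omega)<\infty$ and take $\alpha>\g(\Omega)$, so that $\mathrm{Hol}(\D,\Omega)\subset \mathrm{H}^\infty_{v_\alpha}$. By a uniform-boundedness / normal-families argument (or directly from Theorem \ref{thm:growth-number}, $\log|w|\le \alpha d_\Omega(0,w)+C$), I want a uniform bound for $f\in \mathrm{Hol}(\D,\Omega)$ of the form $|f(z)|\le C_f(1-|z|)^{-\alpha}$. The bound $|f(z)|\le C e^{\alpha d_\Omega(0,f(z))}$ together with the Schwarz–Pick bound $d_\Omega(f(0),f(z))\le d_\D(0,z)$ gives this. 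Then $\int_\D (1-|z|^2)^{\beta}|f|^p\,dA<\infty$ as soon as $\beta-p\alpha>-1$. This yields $\b(f)\ge \sup\{p/(\beta+2): \beta>p\alpha-1\}$, and letting $\beta\downarrow p\alpha-1$ and then $p\to\infty$ gives $\b(f)\ge 1/\alpha$ (the ratio $p/(p\alpha+1)\to1/\alpha$). Letting $\alpha\downarrow\g(\Omega)$ gives the bound. If $\g=0$, the same argument gives $\b=+\infty$.

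\textbf{Upper bound $\b\le 1/\g$.} This is the main obstacle: I need a single $f$ whose Bergman exponent is small. I would use the universal covering map $\pi:\D\to\Omega$ with $\pi(0)=0$, which is the extremal map, since $d_\Omega(0,\pi(z))=d_\D(0,z')$ for a suitable lift $z'$. Given $\gamma<\g(\Omega)$, Theorem \ref{thm:growth-number} supplies points $w_n\to\infty$ with $\log|w_n|\ge \gamma d_\Omega(0,w_n)$. Lifting them, I get $z_n\in\D$ with $|\pi(z_n)|\ge c(1-|z_n|)^{-\gamma}$. To pass from pointwise values to the integral, I use subharmonicity of $|\pi|^p$ on the disc $\Delta_n$ of hyperbolic radius $1/2$ centered at $z_n$. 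On $\Delta_n$, however, the mean value only bounds $|\pi(z_n)|^p$ from above by the average, which is exactly the direction I need:
$$|\pi(z_n)|^p\lesssim (1-|z_n|)^{-2}\int_{\Delta_n}|\pi|^p\,dA\lesssim (1-|z_n|)^{-2-\beta}\int_{\Delta_n}(1-|z|^2)^\beta|\pi|^p\,dA.$$
So if $\pi\in A^p_\beta$, then $(1-|z_n|)^{-p\gamma}\lesssim (1-|z_n|)^{-2-\beta}$ along $|z_n|\to1$, forcing $p\gamma\le \beta+2$, i.e. $p/(\beta+2)\le 1/\gamma$. Hence $\b(\pi)\le 1/\gamma$, and so $\b(\Omega)\le 1/\g(\Omega)$; if $\g=\infty$, this gives $\b=0$. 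The delicate point to check is the choice of lifts: I must use lifts with $d_\D(0,z_n)=d_\Omega(0,w_n)$, which exist because the covering map realizes the hyperbolic distance through some preimage.
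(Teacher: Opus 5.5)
Your proof is correct. The lower bound $\b(\Omega)\ge 1/\g(\Omega)$ is essentially the paper's argument. Both use $\mathrm{H}^\infty_{v_\alpha}\subset A^p_\beta$ for $\beta>p\alpha-1$ and let $p/(\beta+2)\to 1/\alpha$. You do this function by function, while the paper passes through $\b_\beta(\Omega)\ge(\beta+1)/\g(\Omega)$ and the cited inequality $\b_\beta(\Omega)/(\beta+2)\le\b(\Omega)$, then lets $\beta\to+\infty$. The bound $\abs{f(z)}\le C_f(1-\abs{z})^{-\alpha}$ you want is simply the definition of $f\in\mathrm{H}^\infty_{v_\alpha}$, so no normal-families argument is needed.

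The upper bound is where you genuinely diverge. The paper never singles out a function. If $q<\b(\Omega)$, every $f\in\mathrm{Hol}(\D,\Omega)$ lies in some $A^p_\alpha$ with $p/(\alpha+2)>q$. The general inclusion $A^p_\alpha\subset\mathrm{H}^\infty_{v_{(\alpha+2)/p}}$ then puts $f$ in $\mathrm{H}^\infty_{v_{1/q}}$, so $\g(\Omega)\le 1/q$. That inclusion is exactly your sub-mean-value estimate, proved at every point rather than along a sequence. The paper's argument uses no hyperbolic geometry and applies verbatim to every domain. So (a)--(c) come out without your split into bounded, non-hyperbolic and hyperbolic cases, and Theorem~\ref{thm:growth-number} is only needed for the final formula.

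Your route instead combines Theorem~\ref{thm:growth-number}, distance-realizing lifts, and the localized estimate for the universal covering map. It costs hyperbolicity, hence your separate treatment of $\C$ and $\C\setminus\{a\}$. In exchange it shows explicitly that the covering map is extremal, $\b(\Omega)=\b(f_\Omega)$.

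Two cosmetic points remain. First, the area sub-mean-value inequality should be applied on a Euclidean disc centred at $z_n$, such as $D(z_n,(1-\abs{z_n})/2)$, since a hyperbolic disc about $z_n$ is not Euclidean-centred there. Second, $\abs{z_n}\to1$ should be justified; this is immediate because $\pi(z_n)=w_n\to\infty$ while $\pi$ is bounded on compact subsets of $\D$.
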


We recall that bounded domains have infinite Bergman number, while non-hyperbolic domains have zero Bergman number \cite[Lemma 2.2]{Karafyllia}. Then, the latter result yields a complete characterization of the Bergman number of a general domain in terms of the hyperbolic distance. We should mention that this characterization was known for simply connected domains \cite[Theorem 1.3.(3)]{KK}. However, our arguments are completely different. In fact, as far as we know, this is the first characterization of the Bergman number of a general domain.

As an application of our results, we will also derive the following:
\begin{theorem}
\label{thm:examples}
\begin{enumerate}[\rm(a)]
\item For every $\alpha \in [0,+\infty]$, there exists a domain $\Omega \subset \C$ such that $\g(\Omega) = \alpha$.
\item For every $q \in [0,+\infty]$, there exists a domain $\Omega \subset \C$ such that $\b(\Omega) = q$.
\end{enumerate}
\end{theorem}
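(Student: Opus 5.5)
By Theorem \ref{thm:bergman-growth}, part (b) follows from part (a). If $\g(\Omega)=\alpha\in(0,+\infty)$, then $\b(\Omega)=1/\alpha$. The value $\g(\Omega)=0$ gives $\b(\Omega)=+\infty$, and $\g(\Omega)=+\infty$ gives $\b(\Omega)=0$. So it suffices to construct, for each $\alpha\in[0,+\infty]$, a domain with growth number $\alpha$.

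The endpoint cases are immediate. A bounded domain has $\g(\Omega)=0$, since $\mathrm{Hol}(\D,\Omega)\subset \mathrm{H}^\infty\subset\mathrm{H}^\infty_{v_\beta}$ for all $\beta>0$. A non-hyperbolic domain, say $\C$, has $\g(\C)=+\infty$, because $e^{1/(1-z)}$ is not in any growth space.

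For $\alpha\in(0,+\infty)$ I would use angular sectors $S_\theta=\{w: |\arg w|<\theta/2\}$ with $\theta\in(0,2\pi]$, where $\theta=2\pi$ means the slit plane. I would apply Theorem \ref{thm:growth-number} after translating so that the sector contains $0$; this is allowed since $\g$ is translation invariant.

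\emph{Step 1: the hyperbolic distance in a sector.} The map $w\mapsto w^{\pi/\theta}$ sends $S_\theta$ onto the right half-plane $\mathbb{H}$. In $\mathbb{H}$ one has $d_{\mathbb H}(1,x)=\tfrac12\log x$ for real $x\geq 1$, with the paper's normalization of curvature. More generally, the distance from $1$ to the circle $\{|\zeta|=r\}\cap\mathbb{H}$ is $\tfrac12\log r+O(1)$, and it is attained on the real axis. Therefore
$$d_{S_\theta}(1,F_R)=\tfrac{\pi}{2\theta}\log R+O(1),$$
and Theorem \ref{thm:growth-number} gives $\g(S_\theta)=2\theta/\pi$. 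Replacing the base point $0$ by $1$ changes distances by a bounded amount, so this does not affect the limit.

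\emph{Step 2: covering all of $(0,+\infty)$.} As $\theta$ ranges over $(0,2\pi]$, the value $2\theta/\pi$ covers only $(0,4]$. To reach larger $\alpha$, I would pass to Riemann-surface-type domains and use non-simply-connected planar domains whose hyperbolic distance to infinity grows more slowly. A convenient candidate is $\Omega=\C\setminus\{\text{closed set}\}$ with a sparse geometric sequence of points removed, such as $E=\{0\}\cup\{\lambda^n:n\geq 0\}$ with $\lambda>1$ and $0$ moved off the base point. For such domains I expect $d_\Omega(z_0,F_R)\approx c(\lambda)\log R$, with $c(\lambda)\to0$ as $\lambda\to\infty$ and $c(\lambda)\to$ (the slit-plane value) as $\lambda\to1$. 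Checking this requires comparing the hyperbolic density with $1/(|w|\log(\cdot))$-type bounds on annuli $\lambda^n<|w|<\lambda^{n+1}$. The scale invariance $w\mapsto\lambda w$ of $E$ makes the quotient $\lim d_\Omega(z_0,F_R)/\log R$ exist, and it equals the hyperbolic length of the shortest loop class in the torus obtained as the quotient, divided by $\log\lambda$. Continuity and monotonicity in $\lambda$ then give every value in $(0,+\infty)$, via the intermediate value theorem.

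\emph{Main obstacle.} The hard step is this last continuity and range argument: showing that the constant $c(\lambda)$ varies continuously and tends to $0$ and to $+\infty$ at the ends. I would first try monotone families, for example varying slit angles combined with removed points. Domain monotonicity of $d_\Omega$ gives monotonicity of $\g$, and I would prove continuity by comparing $\Omega_\lambda$ with $\Omega_{\lambda'}$ through quasiconformal stretches $w\mapsto w|w|^{\epsilon}$, which change hyperbolic distances by a factor $1+O(\epsilon)$.
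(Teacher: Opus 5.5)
Your reduction of (b) to (a) through Theorem~\ref{thm:bergman-growth}, the endpoint cases $\alpha\in\{0,+\infty\}$, and the use of sectors for small $\alpha$ all match the paper. However, Step 1 is off by a factor of two. With the paper's normalization, the Cayley map gives $d_{\mathbb H}(1,x)=d_{\D}\bigl(0,\tfrac{x-1}{x+1}\bigr)=\log x$, not $\tfrac12\log x$; the value $\tfrac12\log x$ belongs to the slit plane. Hence $d_{S_\theta}(1,F_R)=\tfrac{\pi}{\theta}\log R+O(1)$ and $\g(S_\theta)=\theta/\pi$, so sectors only produce $(0,2]$. Your value $\g=4$ for the slit plane contradicts the Koebe bound $\g(\Omega)\le 2$ for simply connected $\Omega$ (Lemma~\ref{lemma:growth-examples-simpconn}). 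Already for the half-plane, $\frac{1+z}{1-z}\in\mathrm{H}^\infty_{v_1}$ gives $\g=1$. This does not wreck the architecture, but Step 2 must then produce every value in $(2,+\infty)$.

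The genuine gap is Step 2, which carries the whole theorem for $\alpha>2$ but is only a list of expectations. Your family is essentially the paper's $\Omega_\lambda=\C\setminus(\{0\}\cup\{-\lambda^n:n\in\Z\})$ with base point $1$. Note that you need $n\in\Z$: $\{0\}\cup\{\lambda^n:n\ge0\}$ is not invariant under $w\mapsto\lambda w$.

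Your continuity idea is sound, and it differs from the paper's route through Hejhal's theorem. The map $w\mapsto w\abs{w}^{\epsilon}$ is a $(1+\epsilon)$-quasiconformal homeomorphism of $\Omega_\lambda$ onto $\Omega_{\lambda^{1+\epsilon}}$ carrying $\{\abs{w}=R\}$ to $\{\abs{w}=R^{1+\epsilon}\}$. Lifted to the disk, $K$-quasiconformal maps satisfy $d(f(a),f(b))\le K\,d(a,b)+C_K$. This is an additive constant, not a pure factor $1+O(\epsilon)$, but it is harmless here, and it yields $\g(\Omega_\lambda)\le\g(\Omega_{\lambda^{1+\epsilon}})\le(1+\epsilon)^2\g(\Omega_\lambda)$.

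What is missing are the two endpoint limits that the intermediate value argument needs.
\begin{itemize}
\item For $\lambda\to+\infty$, scale invariance gives $d(1,\lambda^N)\le N\,d(1,\lambda)$, so you need $d_{\Omega_\lambda}(1,\lambda)=o(\log\lambda)$. The paper obtains $O(\log\log\lambda)$ by integrating the Beardon--Pommerenke bound (Theorem~\ref{thm:BeardonPommerenke}) along $[1,\lambda]$. Your ``$1/(\abs{w}\log(\cdot))$-type bounds'' point there but are not carried out.
\item For $\lambda\to1^+$ you give no argument, and this is the harder direction. It requires a \emph{lower} bound for $d(1,F_R)$ over entire circles. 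Domain monotonicity only gives $\g(\Omega_\lambda)\ge2$, so without this limit your family may leave a gap just above $2$.
\end{itemize}
The paper settles the second limit in two steps. First, circular symmetrization shows that the distance to $F_R$ is attained on the positive axis and that segments of that axis are geodesics, so $\g(\Omega_\lambda)=\log\lambda/d_{\Omega_\lambda}(1,\lambda)$ exactly. Second, Hejhal's kernel theorem (Corollary~\ref{cor:distance-convergence}) is applied to $d_{\Omega_{\lambda_n}}(1,\lambda_n^{m_n})=m_n\,d_{\Omega_{\lambda_n}}(1,\lambda_n)$ with $\lambda_n^{m_n}\to2$. Your ``shortest loop class in the torus'' formula is an unjustified stand-in for the first of these steps. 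The quotient is a once-punctured torus, and you would still have to relate $d(1,F_R)$ to the stable translation length of $w\mapsto\lambda w$.
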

The proof of the latter result involves the construction of explicit examples. In particular, for the cases in which $\b(\Omega) \in (0,1/2)$, the complement of each of the domains we build is countable. This has some consequences with respect to the Hardy number of these domains, which will lead us to answer a question that was posed in \cite{BCZ}.

The organization of the article is as follows: first, we give the needed preliminaries in Section \ref{sec:preliminaries}. This mainly concerns the hyperbolic domains, their universal covering maps, and their hyperbolic distance. After that, we discuss the growth spaces in Section \ref{sec:growth-spaces} and the growth number of a domain in Section \ref{sec:growth-number}, proving Theorem \ref{thm:growth-number}. We continue the discussion by proving Theorem \ref{thm:examples}.(a) in Section \ref{sec:examples}. After that, we elaborate on the Bergman number of a domain in Section \ref{sec:bergman}, and we deduce Theorem \ref{thm:bergman-growth} using the inclusions among the Bergman spaces and the growth spaces, as well as Theorem \ref{thm:examples}.(b). We state some consequences of our work in Section \ref{sec:questions}. Namely, we find domains with prefixed Bergman number but zero Hardy number. Lastly, in Section \ref{sec:bloch}, we introduce the Bloch number of a domain and use it to complement a classical theorem for the Bloch space. We also derive some examples in Section \ref{sec:final-example}.

\textit{Acknowledgement}. The authors want to thank Artur Nicolau for the fruitful discussions during his visit to the Instituto de Matem\'aticas de la Universidad de Sevilla, which led to the content in Section \ref{sec:final-example}.

\section{Preliminaries}
\label{sec:preliminaries}

\subsection{Hyperbolic domains and their universal covering maps}
\label{subsec:covering}
A domain $\Omega \subset \C$ is called hyperbolic if its complement $\C \setminus \Omega$ contains at least two points (i.e., $\Omega \neq \C$ and there exists no $w \in \C$ such that $\Omega = \C \setminus \{w\}$). These domains enjoy several special features when seen as Riemann surfaces. For instance, every hyperbolic domain is covered by the unit disk, which means that there exists a universal covering map $f_{\Omega} \colon \D \to \Omega$ (see \cite[Theorem 16.5.1]{ConwayII} for a construction). This holomorphic map has the following properties:

(I) For every $f \in \mathrm{Hol}(\D, \Omega)$ there exists a holomorphic map $g \colon \D \to \D$ such that $f = f_{\Omega} \circ g$. This fact is known as the lifting property.

(II) Consider the group $\Gamma = \{\gamma \in \mathrm{Aut}(\D) : f_{\Omega} \circ \gamma = f_{\Omega}\}$. For $z \in \D$ and $w \in \Omega$ such that $f_{\Omega}(z) = w$, we have that
$$f^{-1}_{\Omega}(w) = \{\gamma(z) : \gamma \in \Gamma\}.$$

We refer to \cite[Section 1.6]{Abate} and \cite[Sections 1 and 2]{Milnor} for more basic information.

\begin{remark}
Let $\Omega \subset \C$ be a hyperbolic domain. Every universal covering map of $\Omega$ is of the form $g_{\Omega} = f_{\Omega} \circ g$, where $f_{\Omega}$ is a universal covering map of $\Omega$ and $g \in \mathrm{Aut}(\D)$. Due to this fact, given $z \in \D$ and $w \in \Omega$, there is a universal covering map satisfying $f_{\Omega}(z) = w$.
\end{remark}

\begin{remark}
\label{remark:simply-connected}
A hyperbolic domain $\Omega \subsetneq \C$ is simply connected if and only if $\Gamma = \{\mathrm{Id}_{\D}\}$, in which case $f_{\Omega}$ is a Riemann map of $\Omega$.
\end{remark}

\subsection{Hyperbolic distance}
The hyperbolic distance of a hyperbolic domain is a conformal invariant. We refer to \cite{BeardonMinda} for a comprehensive monograph about this topic. Usually (see, for instance, \cite{Milnor}), it is defined as the distance associated with a conformally invariant metric. For the purpose of our research, we introduce it in the following equivalent form:

If $z_1,z_2 \in \D$, the hyperbolic distance between $z_1$ and $z_2$ in $\D$ is given by
\begin{equation}
\label{eq:hypdist-D}
d_{\D}(z_1,z_2) = \log\left(\dfrac{1+\rho_{\D}(z_1,z_2)}{1-\rho_{\D}(z_1,z_2)}\right), \qquad \text{where} \qquad \rho_{\D}(z_1,z_2) = \abs{\dfrac{z_1-z_2}{1-z_1\overline{z_2}}}.
\end{equation}
The conformal invariance of this definition comes from the Schwarz-Pick Lemma.

In the case of a general hyperbolic domain $\Omega \subset \C$, consider a universal covering map $f_{\Omega}$. Let $w_1,w_2 \in \Omega$, and choose $z_k \in \D$ so that $f_{\Omega}(z_k) = w_k$ for $k \in \{1,2\}$. Then, we define the hyperbolic distance between $w_1$ and $w_2$ in $\Omega$ as 
\begin{align}
\label{eq:domega-inf}
d_{\Omega}(w_1,w_2) := &\inf(\{d_{\D}(a,b) : a,b \in \D, \, f_{\Omega}(a) = w_1, \, f_{\Omega}(b) = w_2\}) \\
= & \inf_{\substack{z \in \D \\ f_{\Omega}(z) = w_2}}d_{\D}(z_1,z) = \inf_{\gamma \in \Gamma}d_{\D}(z_1,\gamma(z_2)), \notag
\end{align}
where $\Gamma$ is the group of automorphism associated with $\Omega$. We will very often use that the infimum in the latter definition is attained (see \cite[Proposition 1.7.3.(iv)]{Abate}).

We also recall that the hyperbolic distance has a monotonicity property with respect to the domain \cite[Theorem 10.5]{BeardonMinda}. Namely, if $\Omega_1 \subset \Omega_2 \subset \C$ are hyperbolic domains, then
\begin{equation}
\label{eq:d-omega-monotonicity}
d_{\Omega_1}(w_1,w_2) \geq d_{\Omega_2}(w_1,w_2), \qquad w_1,w_2 \in \Omega_1.
\end{equation}

\subsection{The Dirichlet polygon}
\label{subsec:dirichlet}
This subsection is devoted to recall some of the tools which can be found in \cite[Chapter 9]{Beardon}. 

Let $\Omega \subset \C$ be a hyperbolic multiply connected domain, and choose $z \in \D$. The Dirichlet polygon of $\Omega$ with center $z$ is given by
$$\mathcal{D} = \{w \in \D : d_{\D}(z,w) < d_{\D}(z,\gamma(w)) \text{ for all } \gamma \in \Gamma \setminus \{\mathrm{Id}_{\D}\}\},$$
where $\Gamma$ is the group of automorphisms associated with $\Omega$.

Dirichlet polygons are (convex) fundamental domains \cite[Theorem 9.4.2]{Beardon}, meaning that they have the following properties:

(a) there exists a set $\mathcal{D} \subset A \subset \overline{\mathcal{D}} \cap \D$ such that $f_{\Omega}$ acts as a bijection from $A$ to $\Omega$, and

(b) the hyperbolic area of $\partial \mathcal{D} \cap \D$ is null, meaning that
$$\int_{\partial \mathcal{D} \cap \D}\dfrac{dA(z)}{(1-\abs{z}^2)^2} = 0.$$
In particular, $A(\partial \mathcal{D}) = 0$.

Therefore, if $\Gamma$ is the group of automorphisms associated with $\Omega$, then we see that
$$\bigcup_{\gamma \in \Gamma} \gamma(\mathcal{D})$$
is a pairwise disjoint union of fundamental domains which is dense in $\D$.

In the case that $\Omega \subset \C$ is a simply connected domain, by Remark \ref{remark:simply-connected} we clearly see that $f_{\Omega} \colon \D \to \Omega$ is a bijection. In that case, we define $\mathcal{D} =  \D$ as the Dirichlet polygon of $\Omega$ (for any center $z \in \D$).

\subsection{Carath\'eodory's Theorem}
A well-known result of Carath\'eodory aims to characterize when the conformal maps of a sequence of simply connected domains converge in terms of their geometry \cite[Theorem 3.5.8]{BCDM}. This result has been generalized by Hejhal \cite{Hejhal} to the case of universal covering maps of hyperbolic domains. In order to understand the result, we need to introduce some notation.

\begin{definition}
Let $z \in \C$ and let $\{\Omega_n\}$ be a sequence of domains such that $z \in \Omega_n$ for all $n \in \N$. The kernel of $\{\Omega_n\}$ with respect to $z$ is $G \cup \{z\}$, where $G \subset \C$ is the (possibly empty) set of all points $w \in \C$ such that there exists an open connected set $D \subset \C$ with the property that $\{z,w\} \subset D$ and $D \subset \Omega_n$ for all but a finite number of $n \in \N$.

We say that $\{\Omega_n\}$ converges to its kernel with respect to
$z$ if every subsequence of $\{\Omega_n\}$ has the same kernel with respect to $z$.
\end{definition}

This definition comes from \cite[p. 28]{PommerenkeUnivFun}. It is equivalent to the one originally given by Hejhal in \cite[p. 7]{Hejhal}.

The result of Hejhal links the convergence of universal covering maps with the convergence of their associated hyperbolic domains.
\begin{theorem}
\label{thm:Hejhal}
{\normalfont \cite[Theorem 1]{Hejhal}}
Let $z \in \C$ and let $\{\Omega_n\}$ be a sequence of hyperbolic domains such that $z \in \Omega_n$ for all $n \in \N$. Denote by $f_{\Omega_n}$ the universal covering map of $\Omega_n$ satisfying that $f_{\Omega_n}(0) = z$ and $f'_{\Omega_n}(0) > 0$. The following facts are equivalent:
\begin{enumerate}[\hspace{0.5cm}\rm(a)]
\item There exists a non-constant holomorphic map $f \colon \D \to \C$ such that $f_{\Omega_n}$ converges to $f$ locally uniformly, as $n \to \infty$.
\item The kernel of $\{\Omega_n\}$ with respect to $z$ is a hyperbolic domain and $\{\Omega_n\}$ converges to its kernel with respect to $z$.
\end{enumerate}
If this is the case, then $f$ is the universal covering map of the kernel of $\{\Omega_n\}$ with respect to $z$, satisfying $f(0) = z$ and $f'(0) > 0$.
\end{theorem}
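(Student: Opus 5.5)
This is Hejhal's theorem, which the paper quotes without proof. If I had to prove it, I would first establish (a)$\Rightarrow$(b) together with the identification of the limit, and then obtain (b)$\Rightarrow$(a) by a compactness argument that reduces to the first part.

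\emph{(a)$\Rightarrow$(b).} Let $f_n=f_{\Omega_n}\to f$ locally uniformly with $f$ non-constant.
\begin{enumerate}[\rm(1)]
\item Each $f_n'$ is zero-free, so by Hurwitz $f'$ is zero-free and $f$ is a local homeomorphism.
\item \emph{$f(\D)$ lies in the kernel.} By the argument principle, for every compact $K\subset\D$, small discs around points of $f(K)$ are covered by $f_n$ for all large $n$. Joining $f(0)=z$ to any $f(\zeta)$ by the image of a segment shows that the open connected set $f(\D)$ is contained in the kernel $G\cup\{z\}$.
\item \emph{The kernel lies in $f(\D)$.} Take $w$ in the kernel and $D\ni z,w$ with $D\subset\Omega_n$ for all large $n$. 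By the monotonicity \eqref{eq:d-omega-monotonicity}, $d_{\Omega_n}(z,w)\le d_D(z,w)$. Since the infimum in \eqref{eq:domega-inf} is attained, there are $z_n$ with $f_n(z_n)=w$ and $d_\D(0,z_n)\le d_D(z,w)$. Then $|z_n|\le r<1$, so a subsequence tends to some $\zeta$ with $f(\zeta)=w$.
\item \emph{$f$ is a covering of the kernel.} Take a path $\sigma$ in the kernel. It has a neighbourhood contained in $\Omega_n$ for large $n$, so its $f_n$-lifts from $0$ have uniformly bounded hyperbolic length. These lifts stay in a fixed compact set and are equicontinuous, so a subsequence converges to an $f$-lift of $\sigma$. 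Hence $f$ has path lifting and is a covering map.
\end{enumerate}
Since $\D$ is simply connected, $f$ is the universal covering of the kernel, and the kernel is therefore hyperbolic. Every subsequence of $\{f_n\}$ has the same limit $f$, so by (3) every subsequence of $\{\Omega_n\}$ has kernel $f(\D)$; that is, $\{\Omega_n\}$ converges to its kernel. The normalisations $f(0)=z$ and $f'(0)=\lim f_n'(0)\ge 0$, with $f'(0)\neq 0$ by (1), give the last assertion.

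\emph{(b)$\Rightarrow$(a).} The strategy is to show that $\{f_n\}$ is normal, with every subsequential limit non-constant. Then any limit is, by the first part, the normalised universal covering of the (common) kernel. Uniqueness of the normalised universal covering forces all subsequential limits to coincide, which gives convergence of the whole sequence.
\begin{itemize}
\item \emph{Lower bound on $f_n'(0)$.} If $D(z,\delta)\subset\Omega_n$ for large $n$, Schwarz's lemma applied to a local inverse gives $f_n'(0)\ge\delta$. Hence limits are non-constant.
\item \emph{Upper bound and normality.} This is the step I expect to be the main obstacle. I would argue by contradiction. If $f_n'(0)\to\infty$ or $\{f_n\}$ fails to be normal, use a Zalcman-type rescaling, or compare with the hyperbolic densities $\lambda_{\Omega_n}(z)=1/f_n'(0)\to0$. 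Then show that for a fixed pair $a,b\notin G$, the points of $\C\setminus\Omega_n$ near $a,b$ must escape along a subsequence. This would produce a subsequence whose kernel is strictly larger than $G$, or is non-hyperbolic, contradicting the hypotheses.
\end{itemize}
Once $f_n'(0)$ is bounded, I would get local boundedness from the hyperbolic-metric comparison on compact subsets of $\D$, using that the $f_n$ locally omit neighbourhoods of two fixed complementary points of the kernel. Montel's theorem then yields the required normality.
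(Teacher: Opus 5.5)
The paper does not prove this statement; it quotes it from Hejhal and uses it as a black box, so there is no internal argument to compare with.

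\textbf{Direction (a)$\Rightarrow$(b).} Your outline is sound. In step (4) the hyperbolic-length bound only makes sense for rectifiable paths. This is harmless, because lifting radial segments of small discs already produces evenly covered neighbourhoods.

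\textbf{Direction (b)$\Rightarrow$(a).} Here there is a genuine gap: normality of $\{f_{\Omega_n}\}$ is never established. The upper bound on $f'_{\Omega_n}(0)$ is only a plan. Your route to local boundedness rests on the claim that the $f_{\Omega_n}$ omit neighbourhoods of two fixed points of $\C\setminus G$. Globally this is false, and on compact subsets of $\D$ it is only known a posteriori, once convergence has been proved. A point outside the kernel need not lie outside $\Omega_n$, and $\Omega_n$ may even be dense. The paper's own example shows this. For $\gamma_n\to1^+$, each $\Omega_{\gamma_n}=\C\setminus(\{0\}\cup\{-\gamma_n^k:k\in\Z\})$ is dense, so $f_{\Omega_{\gamma_n}}$ omits no open set. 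Yet $\{\Omega_{\gamma_n}\}$ converges, with respect to $1$, to the hyperbolic kernel $\C\setminus(-\infty,0]$. So neither Montel's theorem with fixed omitted values nor a comparison with a fixed twice-punctured plane is available. The same objection applies to your sketch for $f'_{\Omega_n}(0)$, which is phrased in terms of fixed $a,b\notin G$.

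\textbf{The missing idea.} You need \emph{moving} omitted values with uniform control, and this is exactly where hypothesis (b) enters. The claim is: there are $M,\eta>0$ and $N$ such that for every $n\ge N$ there exist $a_n,b_n\in\C\setminus\Omega_n$ with $\abs{a_n},\abs{b_n}\le M$ and $\abs{a_n-b_n}\ge\eta$.
\begin{itemize}
\item Suppose not. Then there are $n_k\uparrow\infty$ such that $\overline{D(0,k)}\setminus\Omega_{n_k}$ has diameter less than $1/k$.
\item After passing to a further subsequence, these sets shrink to at most one point $c\in\C\cup\{\infty\}$.
\item Hence the kernel of $\{\Omega_{n_k}\}$ with respect to $z$ contains $\C\setminus\{c\}$, so it is not hyperbolic.
\item This contradicts (b), which says every subsequence has the same hyperbolic kernel $G$.
\end{itemize}
Given such $a_n,b_n$, set $h_n=(f_{\Omega_n}-a_n)/(b_n-a_n)$. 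These maps omit $0$ and $1$ and satisfy $\abs{h_n(0)}\le(\abs{z}+M)/\eta$. Montel's theorem then shows that $\{h_n\}$, and hence $\{f_{\Omega_n}\}$, is locally uniformly bounded. This gives normality directly, so a separate upper bound on $f'_{\Omega_n}(0)$ is unnecessary. Your Schwarz-lemma bound $f'_{\Omega_n}(0)\ge\delta$ excludes constant limits. Your uniqueness-of-the-normalised-covering argument then completes (b)$\Rightarrow$(a).
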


For a hyperbolic domain $\Omega \subset \C$, recall that the hyperbolic distance can be introduced as the one associated with the metric given by
\begin{equation}
\label{eq:metric}
\lambda_{\Omega}(f_{\Omega}(z)) = \dfrac{\lambda_{\D}(z)}{\abs{f'_{\Omega}(z)}}, \qquad \lambda_{\D}(z) = \dfrac{2}{1-\abs{z}^2}, \qquad z \in \D,
\end{equation}
where the definition does not depend on the chosen universal covering map $f_{\Omega}$. In this way, we can compare $d_{\Omega_n}$ and $d_{\Omega}$, as $n \to \infty$. In order to do so, notice that if $w$ is in the kernel of $\{\Omega_n\}$, then there exists $N \in \N$ such that $w \in \Omega_n$ for all $n \geq N$. With this in mind, we state the following consequence for which we were not able to find a proper reference:
\begin{corollary}
\label{cor:distance-convergence}
Let $z \in \C$ and let $\{\Omega_n\}$ be a sequence of hyperbolic domains such that $z \in \Omega_n$ for all $n \in \N$. Assume that the kernel $\Omega \subset \C$ of $\{\Omega_n\}$ with respect to $z$ is a hyperbolic domain. If $\{\Omega_n\}$ converges to $\Omega$ with respect to $z$, then
\begin{equation}
\label{eq:to-prove}
\lim_{n \to \infty}d_{\Omega_n}(a,b) = d_{\Omega}(a,b), \qquad a,b \in \Omega.
\end{equation}
Moreover, if $\{b_n\}$ is a sequence in $\Omega$ with $b_n \to b \in \Omega$, as $n \to \infty$, then $b_n \in \Omega_n$ for $n$ large enough and
$$\lim_{n \to \infty}d_{\Omega_n}(a,b_n) = d_{\Omega}(a,b), \qquad a \in \Omega.$$
\begin{proof}
To fix the notation, let $f_n = f_{\Omega_n}$ be the universal covering map of $\Omega_n$ with $f_n(0) = z$ and $f'_n(0) > 0$. Likewise, let $f = f_{\Omega}$ be the universal covering map of $\Omega$ with $f(0) = z$ and $f'(0) > 0$.

Let us first prove \eqref{eq:to-prove}. To do so, let $A,B $ be two points in $ \D$ such that $f(A) = a$, $f(B) = b$, and $d_{\Omega}(a,b) = d_{\D}(A,B)$; see \eqref{eq:domega-inf} and the subsequent comments. By Theorem \ref{thm:Hejhal}, notice that $f_n(A) \to a$ and $f_n(B) \to b$, as $n \to \infty$. Then, applying the triangle inequality and the Schwarz-Pick Lemma we get
\begin{align*}
d_{\Omega_n}(a,b) & \leq d_{\Omega_n}(a,f_n(A)) + d_{\Omega_n}(f_n(A),f_n(B)) + d_{\Omega_n}(f_n(B),b) \\
& \leq d_{\Omega_n}(a,f_n(A)) + d_{\D}(A,B) + d_{\Omega_n}(f_n(B),b).
\end{align*}
Since $a \in \Omega$, there exist $\varepsilon > 0$ and $N \in \N$ so that $\Delta := \{z \in \C : \abs{z-a} < \varepsilon\} \subset \Omega_n$ for all $n \geq N$. Since $f_n(A) \to a$, as $n \to \infty$, there must exists $M \in \N$ so that $f_n(A) \in \Delta $ for all $n \geq M$. Using \eqref{eq:d-omega-monotonicity}, if $n \geq M$, then
$$d_{\Omega_n}(a,f_n(A)) \leq d_{\Delta} (a,f_n(A)) \to 0, \qquad \text{as } n \to \infty.$$
Arguing similarly, we also have $d_{\Omega_n}(b,f_n(B)) \to 0$, as $n \to \infty$. Thus,
\begin{equation}
\label{eq:d1}
\limsup_{n \to \infty} d_{\Omega_n}(a,b) \leq d_{\D}(A,B) = d_{\Omega}(a,b),
\end{equation}

To continue with the proof of \eqref{eq:to-prove}, choose a subsequence such that
$$\lim_{k \to \infty}d_{\Omega_{n_k}}(a,b) = \liminf_{n \to \infty}d_{\Omega_n}(a,b).$$
By Hurwitz Lemma, let $A_{n_k}$ be a point in $\D$ such that $f_{n_k}(A_{n_k}) = a$ and $A_{n_k} \to A$, as $k \to \infty$. For each $k \in \N$ we can also find a point $B_{n_k} \in \D$ such that $f_{n_k}(B_{n_k}) = b$ and $d_{\D}(A_{n_k},B_{n_k}) = d_{\Omega_{n_k}}(a,b)$. Using a further subsequence if needed, we suppose that there exists a point $\widetilde{B} \in \overline{\D}$ such that $B_{n_k} \to \widetilde{B}$, as $k \to \infty$. If $\widetilde{B} \in \partial\D$, then $d_{\D}(A_{n_k},B_{n_k}) \to +\infty$, but
$$d_{\D}(A_{n_k},B_{n_k}) = d_{\Omega_{n_k}}(a,b) \to \liminf_{n \to \infty}d_{\Omega_n}(a,b) < +\infty, \qquad \text{as } n \to \infty.$$
For these reasons, we see that $\widetilde{B} \in \D$ and $f(\widetilde{B}) = b$. Therefore, by \eqref{eq:domega-inf}, we notice that
$$d_{\Omega}(a,b) \leq d_{\D}(A,\widetilde{B}) \leq d_{\D}(A,A_{n_k}) + d_{\D}(A_{n_k},B_{n_k}) + d_{\D}(B_{n_k},\widetilde{B}).$$
Then,
\begin{equation}
\label{eq:d2}
d_{\Omega}(a,b) \leq \lim_{k \to \infty}d_{\D}(A_{n_k},B_{n_k}) = \lim_{k \to \infty}d_{\Omega_{n_k}}(a,b) = \liminf_{n \to \infty}d_{\Omega_{n}}(a,b),
\end{equation}
where we have used that $d_{\D}$ is locally equivalent to the Euclidean distance.
So, the convergence in \eqref{eq:to-prove} follows from the combination of \eqref{eq:d1} and \eqref{eq:d2}.

To complete the proof, let $\{b_n\}$ be a sequence in $\Omega$ with $b_n \to b \in \Omega$, as $n \to \infty$. As before, since $b \in \Omega$, we can choose $\varepsilon > 0$ and $N \in \N$ so that $\Delta := \{z \in \C : \abs{z-b} < \varepsilon\} \subset \Omega_n$ for all $n \geq N$. For sufficiently large $n$, we have that
$$d_{\Omega_n}(a,b_n) \leq d_{\Omega_n}(a,b) + d_{\Omega_n}(b_n,b).$$
As before,
$$d_{\Omega_n}(b_n,b) \leq d_{\Delta}(b_n,b) \to 0, \qquad \text{as } n \to \infty.$$
Then, by \eqref{eq:to-prove},
$$\limsup_{n \to \infty}d_{\Omega_n}(a,b_n) \leq d_{\Omega}(a,b).$$

Similarly, we have
$$d_{\Omega_n}(a,b) \leq d_{\Omega_n}(a,b_n) + d_{\Omega_n}(b_n,b),$$
from which it follows that
$$d_{\Omega}(a,b) \leq \liminf_{n \to \infty}d_{\Omega_n}(a,b_n).$$
This proves the result.
\end{proof}
\end{corollary}

\subsection{An estimate for the hyperbolic metric}
\label{sec:BeardonPommerenke}
During the next sections we will need to estimate the hyperbolic distance on domains with a precise geometry. To this extent, we first recall the following well-known equivalence between the hyperbolic density of a simply connected domain $\Omega \subsetneq \C$ at $z \in \Omega$ and the distance of $z$ to the boundary, that is, $\mathrm{dist}(z,\partial\Omega) = \inf\{\abs{z-w} : w \in \partial \Omega\}$.
\begin{theorem}
\label{thm:estimate-density-simpcon}
{\normalfont \cite[Theorems 8.2 and 8.6]{BeardonMinda}}
Let $\Omega \subsetneq \C$ be a simply connected domain. Then,
$$\dfrac{1}{2\mathrm{dist}(z,\partial\Omega)} \leq \lambda_{\Omega}(z) \leq \dfrac{1}{\mathrm{dist}(z,\partial\Omega)}, \qquad z \in \Omega.$$
\end{theorem}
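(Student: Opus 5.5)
The plan is to derive both inequalities from the Koebe one-quarter theorem and the Schwarz lemma, applied to the Riemann map. Since $\Omega$ is simply connected, Remark \ref{remark:simply-connected} says the universal covering map is a conformal bijection. So fix $z\in\Omega$ and choose a Riemann map $f=f_\Omega\colon\D\to\Omega$ with $f(0)=z$. By \eqref{eq:metric}, $\lambda_\Omega(z)=2/\abs{f'(0)}$. Both inequalities therefore reduce to two-sided bounds on $\abs{f'(0)}$ in terms of $\delta:=\mathrm{dist}(z,\partial\Omega)$. Since $\Omega\neq\C$, we have $0<\delta<+\infty$.

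For the upper bound $\lambda_\Omega(z)\le 1/\delta$, I need $\abs{f'(0)}\ge 2\delta$. This route only yields the constant $4$, so it deserves care. The disc $D(z,\delta)$ is contained in $\Omega$. The map $g(\zeta)=f^{-1}(z+\delta\zeta)$ sends $\D$ into $\D$ and fixes $0$. The Schwarz lemma gives $\abs{g'(0)}=\delta/\abs{f'(0)}\le1$, hence $\abs{f'(0)}\ge\delta$ and $\lambda_\Omega(z)\le 2/\delta$. That is off by a factor of $2$, which suggests the normalization of $\lambda_\Omega$ matters. Indeed, with $\lambda_\D=2/(1-\abs{z}^2)$, the density of the disc $D(z,\delta)$ at its center is $2/\delta$. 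Monotonicity of the metric then gives $\lambda_\Omega(z)\le 2/\delta$, and this is sharp for discs, so the stated constant $1$ cannot hold literally with this normalization. I would therefore read the statement as using the curvature $-4$ normalization $\lambda_\D=1/(1-\abs{z}^2)$ of \cite{BeardonMinda}, under which all constants are halved. With that convention, the Schwarz lemma step gives exactly $\lambda_\Omega(z)\le1/\delta$.

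For the lower bound, the same normalization requires $\abs{f'(0)}\le 4\delta$. This follows from the Koebe one-quarter theorem. The function $h=(f-z)/f'(0)$ is univalent on $\D$ with $h(0)=0$ and $h'(0)=1$, so $h(\D)\supset D(0,1/4)$. Hence $\Omega\supset D(z,\abs{f'(0)}/4)$, which gives $\abs{f'(0)}/4\le\delta$. This yields $\lambda_\Omega(z)\ge 1/(2\delta)$ in the curvature $-4$ normalization, or $\ge 1/(4\cdot\tfrac12\delta)$, i.e. $1/(2\delta)$ after the conversion, in the paper's one.

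The main obstacle is bookkeeping the normalization constant rather than any analytic step. Univalence, which is where simple connectivity enters, is used only through Koebe. The upper bound holds for arbitrary hyperbolic domains by monotonicity. I would first check the disc case $\Omega=\D$, $z=0$ to fix the convention consistent with \cite{BeardonMinda}, and then run the two steps above.
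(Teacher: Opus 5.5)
Your analytic ingredients are the right ones: comparison with $D(z,\delta)$ via the Schwarz lemma (or monotonicity) for the upper bound, and the Koebe one-quarter theorem for the lower bound. This is exactly what lies behind the cited result; the paper itself gives no proof. You are also right that, with the paper's normalization $\lambda_{\D}(z)=2/(1-\abs{z}^2)$, the upper bound $\lambda_{\Omega}(z)\le 1/\delta$ fails, since a disc at its centre gives $\lambda_{\Omega}(z)\,\delta=2$.

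The gap is in your repair. In the curvature $-4$ normalization $\lambda_{\D}(z)=1/(1-\abs{z}^2)$ one has $\lambda_{\Omega}(z)=1/\abs{f'(0)}$. So the lower bound $1/(2\delta)$ would require $\abs{f'(0)}\le 2\delta$, not $\abs{f'(0)}\le 4\delta$. Koebe only gives the latter, i.e.\ $\lambda_{\Omega}(z)\ge 1/(4\delta)$, and this cannot be improved. The Koebe function $k(\zeta)=\zeta/(1-\zeta)^2$ maps $\D$ onto $\C\setminus(-\infty,-1/4]$, and at $z=0$ one has $\delta=1/4$ and $k'(0)=1$. Hence $\lambda_{\Omega}(0)=1=1/(4\delta)<1/(2\delta)$. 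Your claim that Koebe yields $1/(2\delta)$ in the curvature $-4$ normalization is a factor-$2$ slip, and that step fails.

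In fact no normalization $\lambda_{\D}(z)=c/(1-\abs{z}^2)$ makes the displayed two-sided inequality true. The disc gives $\lambda_{\Omega}(z)\,\delta=c$, forcing $c\le 1$. The Koebe domain gives $\lambda_{\Omega}(0)\,\delta=c/4$, forcing $c\ge 2$. So the statement has a misprint in the upper constant; it cannot be rescued by changing convention. Keeping the paper's normalization (which is also that of Beardon--Minda), your two steps prove
\[
\frac{1}{2\,\mathrm{dist}(z,\partial\Omega)}\le\lambda_{\Omega}(z)\le\frac{2}{\mathrm{dist}(z,\partial\Omega)},\qquad z\in\Omega .
\]
This is the correct form of the cited result, and as you note the upper bound holds for every hyperbolic domain. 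Only the lower bound is used later in the paper, in Section~\ref{sec:final-example}. There it gives $\lambda_{\Omega}\ge 1$ where $\mathrm{dist}(\cdot,\partial\Omega)\le 1/2$, and $\lambda_{\Omega}\ge 1/h_{N+1}$ where $\mathrm{dist}(\cdot,\partial\Omega)\le h_{N+1}/2$. Your Koebe argument correctly establishes that lower bound in the paper's normalization.
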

The above result does not hold for general hyperbolic domains $\Omega \subset \C$. For instance, using \cite[Eq. (1.22)]{Ahlfors}, we see that
$$\lim_{x \to 0}x\lambda_{\C \setminus \{0,1\}}(x) = 0.$$

Beardon and Pommerenke have proved a refinement of Theorem \ref{thm:estimate-density-simpcon}. To introduce it, we first give some notation. For a hyperbolic domain $\Omega \subset \C$ and $z \in \Omega$, we define
$$\beta_{\Omega}(z) = \inf\left\lbrace\abs{\log\abs{\dfrac{z-a}{b-a}}} : a,b \in \partial\Omega, \, \abs{z-a} = \mathrm{dist}(z,\partial\Omega)\right\rbrace.$$

\begin{theorem}
\label{thm:BeardonPommerenke}
{\normalfont \cite[Theorem 1]{BeardonPommerenke}}
Let $\Omega \subset \C$ be a hyperbolic domain and let $z \in \Omega$. It holds that
$$\dfrac{1}{2\sqrt{2}} \leq \lambda_{\Omega}(z)\mathrm{dist}(z,\partial\Omega)(\kappa+\beta_{\Omega}(z)) \leq \kappa+\dfrac{\pi}{4},$$
where $\kappa = 4+\log(3+2\sqrt{2})$.
\end{theorem}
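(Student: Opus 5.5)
The plan is to compare $\Omega$ with explicit model domains using the monotonicity property \eqref{eq:d-omega-monotonicity}, in its infinitesimal form: if $\Omega_1\subset\Omega_2$ then $\lambda_{\Omega_1}\ge\lambda_{\Omega_2}$. Fix $z\in\Omega$ and write $d=\mathrm{dist}(z,\partial\Omega)$. Choose $a\in\partial\Omega$ with $\abs{z-a}=d$. For $b\in\partial\Omega$, the affine map $w\mapsto (w-a)/(b-a)$ sends $a,b$ to $0,1$ and $z$ to a point $\zeta$ with $\abs{\zeta}=d/\abs{b-a}$. Hence $\abs{\log\abs{\zeta}}$ is exactly the quantity whose infimum defines $\beta_\Omega(z)$. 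The two inequalities then come from two different comparisons: a superdomain for the lower bound and a subdomain for the upper bound.

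\emph{Lower bound.} Since $\Omega\subset\C\setminus\{a,b\}$, conformal invariance of the density gives
$$\lambda_\Omega(z)\ \ge\ \lambda_{\C\setminus\{a,b\}}(z)=\dfrac{\lambda_{\C\setminus\{0,1\}}(\zeta)}{\abs{b-a}}.$$
So it suffices to prove the one-variable estimate
$$\lambda_{\C\setminus\{0,1\}}(\zeta)\ \ge\ \dfrac{1}{2\sqrt2\,\abs{\zeta}\,(\kappa+\abs{\log\abs{\zeta}})},\qquad \abs{\zeta}\le 1,$$
where $\abs{\zeta}\le1$ holds automatically because $a$ is a nearest boundary point. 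This estimate is of Hempel type. I would derive it from the explicit behaviour of the modular function: the density behaves like $1/(\abs{\zeta}\log(1/\abs{\zeta}))$ near $0$ (compare \cite[Eq. (1.22)]{Ahlfors}) and is bounded below on compact sets away from $0,1$. The constants $2\sqrt2$ and $\kappa=4+\log(3+2\sqrt2)$ are tuned to make the inequality uniform. Multiplying out gives $\lambda_\Omega(z)\,d\,(\kappa+\abs{\log\abs{\zeta}})\ge 1/(2\sqrt2)$. Choosing $b$ so that $\abs{\log\abs{\zeta}}$ approaches $\beta_\Omega(z)$ yields the left inequality.

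\emph{Upper bound.} Here I need a large subdomain of $\Omega$ containing $z$. Two pieces are available.
\begin{enumerate}[\hspace{0.5cm}\rm(i)]
\item The disk $\{\abs{w-z}<d\}$ lies in $\Omega$, which gives $\lambda_\Omega(z)\le 1/d$ by Theorem \ref{thm:estimate-density-simpcon}, or directly.
\item By the definition of $\beta=\beta_\Omega(z)$, no boundary point $b$ satisfies $de^{-\beta}<\abs{b-a}<de^{\beta}$. So the annulus $A=\{de^{-\beta}<\abs{w-a}<de^{\beta}\}$ lies in $\Omega$, and $z$ sits on its geometric-mean circle. Lifting by $\log$ to a strip of width $2\beta$ gives $\lambda_A(z)=\pi/(2\beta d)$.
\end{enumerate}
Combining these gives $\lambda_\Omega(z)\,d\le\min\{1,\pi/(2\beta)\}$, which already has the correct form $O(1/(\kappa+\beta))$ for both small and large $\beta$.

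\emph{Main obstacle.} The crude minimum does not give the sharp constant $\kappa+\pi/4$ in the intermediate range of $\beta$; for instance, near $\beta=\pi/4$ it loses a factor of about $2$. To fix this I would use the larger subdomain $A\cup\{\abs{w-z}<d\}$, which is doubly connected. I would estimate its density at $z$ by mapping to a strip with a bump, or alternatively bound the density on the centre circle of a slightly smaller annulus whose boundary circles pass through points at distance $d$ from $z$. I would then verify the elementary inequality in $\beta$ by calculus; the specific value of $\kappa$ is presumably chosen exactly so that this verification closes. The lower-bound Hempel-type estimate is classical, so I expect the delicate work to be this constant bookkeeping in the upper bound.
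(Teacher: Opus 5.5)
The paper gives no proof of this statement; it quotes \cite[Theorem 1]{BeardonPommerenke}, whose constants $\frac{1}{2\sqrt2}$ and $\kappa+\frac{\pi}{4}$ refer to the curvature $-4$ metric $\lambda_{\D}(z)=1/(1-\abs{z}^2)$. For the upper bound, your disk and annulus comparisons are exactly the right ingredients. However, the ``main obstacle'' you identify comes from mixing normalizations, and the refinement you plan cannot succeed. With $\lambda_{\D}=1/(1-\abs{z}^2)$, the disk gives $\lambda_\Omega(z)d\le 1$ and the annulus gives $\beta\,\lambda_\Omega(z)d\le\pi/4$. So, with no case analysis at all,
\[
\lambda_\Omega(z)d\,(\kappa+\beta)=\kappa\,\lambda_\Omega(z)d+\beta\,\lambda_\Omega(z)d\le\kappa+\frac{\pi}{4}.
\]
In particular, $\kappa$ plays no role in the upper bound; it is dictated by the lower bound. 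You instead combined two different normalizations:
\begin{enumerate}[\hspace{0.5cm}\rm(i)]
\item $\lambda_\Omega(z)d\le 1$, taken from Theorem \ref{thm:estimate-density-simpcon} as printed, whose upper bound $1/\mathrm{dist}$ is the curvature $-4$ value (with \eqref{eq:metric}, $\lambda_{\{\abs{w-z}<d\}}(z)=2/d$);
\item the annulus value $\pi/(2\beta d)$, computed from $\lambda_{\D}=2/(1-\abs{z}^2)$.
\end{enumerate}
Your bound $\min\{1,\pi/(2\beta)\}$ yields $\kappa+\pi/2$, with the worst case at $\beta=\pi/2$. At $\beta=\pi/4$ it gives exactly $\kappa+\pi/4$, so there is no factor-$2$ loss there. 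If you work consistently with \eqref{eq:metric}, the printed upper bound is simply false. For $\Omega=\{\abs{w-z}<d\}$ one has $\beta_\Omega(z)=0$ and $\lambda_\Omega(z)d\,\kappa=2\kappa>\kappa+\pi/4$. So no comparison with $A\cup\{\abs{w-z}<d\}$ and no calculus in $\beta$ can close it; in that normalization the same two-line splitting gives $2\kappa+\pi/2$.

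For the lower bound, passing to $\C\setminus\{a,b\}$ is the right reduction, but two steps fail as written. First, $\abs{\zeta}\le 1$ is not automatic. The nearest-point property only gives $\abs{\zeta-1}\ge\abs{\zeta}$, i.e.\ $\mathrm{Re}\,\zeta\le 1/2$, and $\abs{\zeta}=d/\abs{b-a}>1$ whenever $\abs{b-a}<d$. This is easily repaired: $\zeta\mapsto 1/\zeta$ is an automorphism of $\C\setminus\{0,1\}$ under which both $\abs{\zeta}\lambda_{\C\setminus\{0,1\}}(\zeta)$ and $\abs{\log\abs{\zeta}}$ are invariant, so an estimate on $\abs{\zeta}\le 1$ extends to all $\zeta$. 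Second, and more seriously, the asymptotics at $0$ together with compactness only produce \emph{some} constants $c,K>0$ with $\lambda_{\C\setminus\{0,1\}}(\zeta)\ge c/(\abs{\zeta}(K+\abs{\log\abs{\zeta}}))$. They cannot deliver $1/(2\sqrt2)$ and $\kappa=4+\log(3+2\sqrt2)$, and the remark that the constants are ``tuned'' does not supply them. That explicit one-variable inequality is the whole content of the lower bound, so as it stands this half is established only with unspecified constants. To close the gap, you must either reproduce Beardon--Pommerenke's explicit estimate for $\C\setminus\{0,1\}$ or invoke a quantitative one. One option is Hempel's sharp bound $\lambda_{\C\setminus\{0,1\}}(\zeta)\ge 1/\bigl(2\abs{\zeta}(C_0+\abs{\log\abs{\zeta}})\bigr)$ in curvature $-4$, where $C_0=\Gamma(1/4)^4/(4\pi^2)\approx 4.38<\kappa$; it gives the stated lower bound in either normalization.
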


\section{Growth spaces}
\label{sec:growth-spaces}
For a large class of weights $v$, this section aims to characterize the hyperbolic domains $\Omega \subset \C$ for which $\mathrm{Hol}(\D,\Omega) \subset \mathrm{H}^{\infty}_v$. We will later deduce Theorem \ref{thm:growth-number} by considering the weights $v = v_{\alpha}$, $\alpha > 0$, in such a characterization.

In order to specify the class of weights we will consider in this work, we introduce the following definitions.
\begin{definition}
A weight $v \colon \D \to [0,+\infty)$ is called typical if
\begin{enumerate}[\hspace{0.5cm}(i)]
\item it is positive, meaning that $v(z) > 0 $ for all $z \in \D$;
\item it is radial, meaning that $v(z) = v(\abs{z})$ for all $z \in \D$;
\item it is continuous and non-increasing with respect to $\abs{z}$;
\item it converges to zero on the boundary, meaning that $\lim_{\abs{z} \to 1^-}v(z) = 0$.
\end{enumerate}
\end{definition}
Notice that $v_{\alpha}$, $\alpha > 0$, is always a typical weight. Also, notice that if a positive radial continuous and non-increasing weight $v$ satisfies that
$$\lim_{\abs{z} \to 1^-}v(z) > 0,$$
then $\mathrm{H}^{\infty}_v = \mathrm{H}^{\infty}$.

For every weight $v$, one may construct the associated weight given by
\begin{equation*}
\label{eq:associated-weight}
\widetilde{v}(z) = \inf_{f \in \mathbb{B}(\mathrm{H}^{\infty}_v)}\dfrac{1}{\abs{f(z)}}, \qquad z \in \D,
\end{equation*}
where $\mathbb{B}(\mathrm{H}^{\infty}_v) = \{f \in \mathrm{H}^{\infty}_v : \abs{f(z)}v(z) \leq 1 \text{ for all } z \in \D\}$. Notice that $\widetilde{v}(z) \geq v(z)$ for all $z \in \D$. Indeed, $\mathrm{H}^{\infty}_v = \mathrm{H}^{\infty}_{\widetilde{v}}$. In fact, with their usual norms, they are isometric \cite[Proposition 1.3]{BDLT}.

The notion of the associated weight is useful, for instance, to characterize the boundedness of the composition operators on $\mathrm{H}^{\infty}_v$. Recall that, for a holomorphic map $g \colon \D \to \D$, the composition operator $C_g \colon \mathrm{Hol}(\D,\C) \to \mathrm{Hol}(\D,\C) $ is given by $C_g(f) = f \circ g$, where $f \in \mathrm{Hol}(\D,\C)$.
Thanks to the Closed Graph Theorem, for a Banach space of holomorphic functions $X \subset \mathrm{Hol}(\D,\C)$ such that the convergence in norm implies the uniform convergence in compact sets, $C_g$ is bounded if and only if $C_g(X) \subset X$.
\begin{theorem}
\label{thm:continuity-Cg-Hinftyv}
{\normalfont \cite[Theorem 2.3]{BDLT}}
Let $v$ be a typical weight. The following facts are equivalent:
\begin{enumerate}[\hspace{0.5cm} \rm (a)]
\item All composition operators $C_g \colon \mathrm{H}^{\infty}_v \to \mathrm{H}^{\infty}_v$, $g \in \mathrm{Hol}(\D,\D)$, are bounded.
\item The associated weight $\widetilde{v}$ satisfies
\begin{equation}
\label{eq:continuity-wtv}
\sup_{n \in \N}\dfrac{\widetilde{v}(1-2^{-n})}{\widetilde{v}(1-2^{-n-1})} < +\infty
\end{equation}
\end{enumerate}
\end{theorem}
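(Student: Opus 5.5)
The plan is to work throughout with the associated weight $\widetilde{v}$ rather than $v$. Since $\mathrm{H}^{\infty}_v$ and $\mathrm{H}^{\infty}_{\widetilde{v}}$ coincide isometrically \cite[Proposition 1.3]{BDLT}, their closed unit balls are the same set $\mathbb{B} := \mathbb{B}(\mathrm{H}^{\infty}_v)$. By definition of $\widetilde{v}$, every $f$ in the space satisfies
$$\abs{f(w)} \leq \|f\|/\widetilde{v}(w), \qquad w \in \D.$$

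First I would record three elementary properties of $\widetilde{v}$.
\begin{enumerate}[\hspace{0.5cm}(1)]
\item $\widetilde{v}$ is radial. Since $v$ is radial, $\mathbb{B}$ is invariant under rotations $f \mapsto f(e^{i\theta}\,\cdot)$.
\item $\widetilde{v}$ is non-increasing in $\abs{z}$. For $f \in \mathbb{B}$ the function $r \mapsto \max_{\abs{z}=r}\abs{f(z)}$ is non-decreasing by the maximum principle, and $1/\widetilde{v}(r)$ is the supremum of these quantities over $f \in \mathbb{B}$.
\item For each $z$ the infimum defining $\widetilde{v}(z)$ is attained. The family $\mathbb{B}$ is locally bounded, because $v$ is positive and continuous and hence bounded below on compact sets. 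It is also closed under locally uniform convergence. By Montel's theorem there is therefore $f_z \in \mathbb{B}$ with $\abs{f_z(z)} = 1/\widetilde{v}(z)$.
\end{enumerate}

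For (a)$\Rightarrow$(b), I would take $g(z) = (1+z)/2$, which maps $r_n := 1-2^{-n}$ to $r_{n+1}$. By (a), $C_g$ is bounded, say with norm $C$. Apply it to the extremal function $f = f_{r_{n+1}}$ from property (3). Using the isometry with the $\widetilde{v}$-norm, this gives
$$\dfrac{\widetilde{v}(r_n)}{\widetilde{v}(r_{n+1})} = \abs{f(g(r_n))}\,\widetilde{v}(r_n) \leq \|C_g f\| \leq C$$
for every $n$, which is exactly \eqref{eq:continuity-wtv}.

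For (b)$\Rightarrow$(a), I would factor $g = \varphi \circ g_0$ with $a = g(0)$, where $\varphi$ is the disk automorphism with $\varphi(0) = a$ and $g_0(0) = 0$.

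The factor $C_{g_0}$ is always a contraction. By Schwarz's lemma $\abs{g_0(z)} \leq \abs{z}$, so by monotonicity $\widetilde{v}(g_0(z)) \geq \widetilde{v}(z)$. Hence
$$\abs{f(g_0(z))}\,\widetilde{v}(z) \leq \abs{f(g_0(z))}\,\widetilde{v}(g_0(z)) \leq \|f\|.$$

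It remains to bound $C_{\varphi}$. The standard automorphism estimate gives $1-\abs{\varphi(z)} \geq c\,(1-\abs{z})$ with $c = (1-\abs{a})/(1+\abs{a})$. Choose $k$ with $2^{-k} \leq c$. If $M$ denotes the supremum in \eqref{eq:continuity-wtv}, I expect
$$\widetilde{v}(\varphi(z)) \geq \widetilde{v}\big(1-2^{-k}(1-\abs{z})\big) \geq M^{-(k+1)}\,\widetilde{v}(z).$$
To see this, place $\abs{z}$ between consecutive dyadic points $r_n \leq \abs{z} < r_{n+1}$. Then $1-2^{-k}(1-\abs{z}) \leq r_{n+k+1}$, and chaining \eqref{eq:continuity-wtv} $k+1$ times together with monotonicity gives the bound. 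Consequently $\abs{f(\varphi(z))}\,\widetilde{v}(z) \leq M^{k+1}\|f\|$, so $C_{\varphi}$ is bounded, and therefore so is $C_g = C_{g_0} \circ C_{\varphi}$.

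The main obstacle is this dyadic bookkeeping in the last step, together with verifying that $\widetilde{v}$ really is radial and monotone. Everything else reduces to Schwarz's lemma and normal families. Note that the typicality of $v$ is used only for local boundedness of $\mathbb{B}$ and for the radial symmetry.
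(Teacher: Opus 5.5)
Your proof is correct. The paper does not prove this statement; it quotes it from \cite[Theorem 2.3]{BDLT} and uses it as a black box. So there is no in-paper argument to compare with, and what you give is a self-contained proof along standard lines.

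Your two directions are the two halves of the criterion that $C_g$ is bounded on $\mathrm{H}^{\infty}_v$ if and only if $\sup_{z\in\D}\widetilde{v}(z)/\widetilde{v}(g(z))<+\infty$. Necessity comes from the extremal functions of your property (3), tested against $g(z)=(1+z)/2$. Sufficiency comes from the pointwise bound $\abs{f(w)}\le\|f\|/\widetilde{v}(w)$ once that ratio is controlled.

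Two minor remarks:

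\begin{enumerate}
\item The factorization $g=\varphi\circ g_0$ is not needed. With $a=g(0)$, Schwarz--Pick gives $\abs{g(z)}\le(\abs{a}+\abs{z})/(1+\abs{a}\abs{z})$. Hence $1-\abs{g(z)}\ge\frac{1-\abs{a}}{1+\abs{a}}(1-\abs{z})$ for every $g\in\mathrm{Hol}(\D,\D)$, and your dyadic chaining applies to $g$ directly.
\item If $\N$ starts at $1$, points with $\abs{z}<1/2$ satisfy $r_n\le\abs{z}<r_{n+1}$ for no $n\in\N$. You should enlarge $M$ to include the ratio $\widetilde{v}(0)/\widetilde{v}(1/2)$, which is finite since $0<v\le\widetilde{v}\le v(0)$.
\end{enumerate}
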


We are now able to show the main results of this section.
\begin{theorem}
\label{thm:univconmap-Hinftyv}
Let $\Omega \subset \C$ be a hyperbolic domain with universal covering map $f_{\Omega}$, and let $v$ be a typical weight for which all composition operators are bounded on $\mathrm{H}^{\infty}_v$. Assume that $0 \in \Omega$. The following facts are equivalent:
\begin{enumerate}[\hspace{0.5cm}\rm(a)]
\item $\mathrm{Hol}(\D,\Omega) \subset \mathrm{H}^{\infty}_v$,
\item $f_{\Omega} \in \mathrm{H}^{\infty}_v$,
\item $\displaystyle\sup_{w \in \Omega}\abs{w}v\left(\dfrac{e^{d_{\Omega}(0,w)}-1}{e^{d_{\Omega}(0,w)}+1}\right) < +\infty$.
\end{enumerate}
\begin{proof}
Without loss of generality we may assume that $f_{\Omega}(0) = 0$.

(a) $\Longrightarrow$ (b). Since $f_{\Omega} \in \mathrm{Hol}(\D,\Omega)$, it is obvious that $f_{\Omega} \in \mathrm{H}^{\infty}_v$ whenever $\mathrm{Hol}(\D,\Omega) \subset \mathrm{H}^{\infty}_v$.

(b) $\Longrightarrow$  (a). Let $f \in \mathrm{Hol}(\D,\Omega)$. By property (I) in Subsection \ref{subsec:covering}, we can find a holomorphic map $g \colon \D \to \D$ such that $f = f_{\Omega} \circ g$. Since we are assuming that all composition operators are bounded on $\mathrm{H}^{\infty}_v$, if $f_{\Omega} \in \mathrm{H}^{\infty}_v$, it follows at once that $f \in \mathrm{H}^{\infty}_v$. Since $f$ is an arbitrary function of $\mathrm{Hol}(\D,\Omega)$, it follows that $\mathrm{Hol}(\D,\Omega) \subset \mathrm{H}^{\infty}_v$, and so the implication holds.

(b) $\Longleftrightarrow$  (c). For a moment, assume that $\Omega$ is multiply connected. Let $\Gamma$ be the group of automorphisms associated with $\Omega$ (see the property (II) in Subsection \ref{subsec:covering}) and let $\mathcal{D}$ be the Dirichlet polygon of $\Omega$ with center $0$ (see Subsection \ref{subsec:dirichlet}). Notice that
\begin{align}
\mathcal{D} & = \{z \in \D : d_{\D}(0,z) < d_{\D}(0,\gamma(z)) \text{ for all } \gamma \in \Gamma \setminus \{\mathrm{Id}_{\D}\}\} \notag \\
& = \{z \in \D : \abs{z} < \abs{\gamma(z)} \text{ for all } \gamma \in \Gamma \setminus \{\mathrm{Id}_{\D}\}\}, \label{eq:monotonic-D}
\end{align}
where we have used \eqref{eq:hypdist-D} and the fact that $d_{\D}(0,z) = d_{\D}(0,\abs{z})$ is increasing with respect to $\abs{z}$. We recall that, given $z,w \in \D$, it holds that $f_{\Omega}(z) = f_{\Omega}(w)$ if and only if there exists $\gamma \in \Gamma$ such that $w = \gamma(z)$. In that case, if $z \in \overline{\mathcal{D}} \cap \D$, then $\abs{z} \leq \abs{w}$ and so $v(z) \geq v(w)$, since $v$ is radial and non-increasing. Therefore,
\begin{align}
\label{eq:sup-dirichlet}
\sup_{z \in \D}\abs{f_{\Omega}(z)}v(z) & = \sup_{z \in \D}\abs{f_{\Omega}(z)}v(\abs{z}) = \sup_{z \in \overline{\mathcal{D}} \cap \D}\abs{f_{\Omega}(z)}v(\abs{z}) \\
& = \sup_{z \in \mathcal{D}}\abs{f_{\Omega}(z)}v(\abs{z}). \notag
\end{align}

But, for $z \in \mathcal{D}$, we can use \eqref{eq:domega-inf} and the fact that $f_{\Omega}(0) = 0$ to see that
$$\abs{z} = \dfrac{e^{d_{\D}(0,z)}-1}{e^{d_{\D}(0,z)}+1} = \dfrac{e^{d_{\Omega}(0,f_{\Omega}(z))}-1}{e^{d_{\Omega}(0,f_{\Omega}(z))}+1}.$$
Using this fact and \eqref{eq:sup-dirichlet}, we conclude that
\begin{equation}
\label{eq:sup-dirichlet-2}
\sup_{z \in \mathcal{D}}\abs{f_{\Omega}(z)}v(z) = \sup_{w \in f_{\Omega}(\mathcal{D})}\abs{w}v\left(\dfrac{e^{d_{\Omega}(0,w)}-1}{e^{d_{\Omega}(0,w)}+1}\right) = \sup_{w \in \Omega}\abs{w}v\left(\dfrac{e^{d_{\Omega}(0,w)}-1}{e^{d_{\Omega}(0,w)}+1}\right),
\end{equation}
where we have also used that $f_{\Omega}(\mathcal{D})$ is dense in $\Omega$ (see Property (b) in Subsection \ref{subsec:dirichlet}). Thus, $f_{\Omega} \in \mathrm{H}^{\infty}_v$ if and only if (c) holds.

In the case that $\Omega$ is simply connected, we see that \eqref{eq:sup-dirichlet-2} also holds with the convention that $\mathcal{D} = \D$ (see Remark \ref{remark:simply-connected}), and so the proof follows similarly.
\end{proof}
\end{theorem}

In the literature, the spaces
$$\mathrm{H}^0_v = \{f \in \mathrm{Hol}(\D,\C) : \lim_{\abs{z} \to 1} \abs{f(z)}v(z) = 0\}$$
where $v$ is a typical weight are also well-studied. For instance, as noticed in \cite[Theorem 2.3]{BDLT}, the typical weights $v$ for which all composition operators in $\mathrm{H}^0_v$ are bounded coincide with those for which all composition operators in $\mathrm{H}^{\infty}_v$ are bounded (see Theorem \ref{thm:continuity-Cg-Hinftyv}). In fact, we notice that the methods used in the proof of Theorem \ref{thm:univconmap-Hinftyv} can be adapted to obtain the following result.

\begin{theorem}
\label{thm:H0v-limsup0}
Let $\Omega \subset \C$ be a hyperbolic domain with universal covering map $f_{\Omega}$, and let $v$ be a typical weight satisfying for which all composition operators are bounded on $\mathrm{H}^{0}_v$. Assume that $0 \in \Omega$. The following facts are equivalent:
\begin{enumerate}[\hspace{0.5cm}\rm(a)]
\item $\mathrm{Hol}(\D,\Omega) \subset \mathrm{H}^0_v$,
\item $f_{\Omega} \in \mathrm{H}^0_v$,
\item $\displaystyle\limsup_{\substack{w \to \infty \\ w \in \Omega}}\abs{w}v\left(\dfrac{e^{d_{\Omega}(0,w)}-1}{e^{d_{\Omega}(0,w)}+1}\right) = 0$.
\end{enumerate}
\begin{proof}
Without loss of generality, we may assume that $f_{\Omega}(0) = 0$.

(a) $\Longleftrightarrow$ (b). As before, it follows using the same arguments as in the proof of Theorem \ref{thm:univconmap-Hinftyv}.

(b) $\Longleftrightarrow$ (c). First of all, we claim that (b) holds if and only if
\begin{equation}
\label{eq:member-H0v-Dirichlet}
\lim_{\substack{\abs{z} \to 1 \\ z \in \mathcal{D}}}\abs{f_{\Omega}(z)}v(z) = 0,
\end{equation}
where $\mathcal{D}$ is the Dirichlet polygon of $\Omega$ with center $0$. If $\Omega$ is simply connected, this is obvious since $\mathcal{D} = \D$. For a multiply connected domain, it is also clear that (b) implies \eqref{eq:member-H0v-Dirichlet}.

For the converse, we have to prove that \eqref{eq:member-H0v-Dirichlet} implies that
\begin{equation}
\label{eq:to-prove-2}
\lim_{n \to \infty}\abs{f_{\Omega}(z_n)}v(z_n) = 0
\end{equation}
for every sequence $\{z_n\} \subset \D$ with $\abs{z_n} \to 1$, as $n \to \infty$. By density, we can assume that $\{z_n : n \in \N\} \subset \bigcup_{\gamma \in \Gamma}\gamma(\mathcal{D})$, where $\Gamma$ is the group of automorphisms of $\D$ associated to $\Omega$. In order prove \eqref{eq:to-prove-2}, for every $n \in \N$ consider $\gamma_n \in \Gamma$ such that $w_n = \gamma_n(z_n) \in \mathcal{D}$. By the very definition of $\mathcal{D}$, it holds that $\abs{w_n} \leq \abs{z_n}$. Notice that
$$f_{\Omega}(w_n) = f_{\Omega}(z_n), \qquad v(w_n) \geq v(z_n),$$
where we have used the monotonicity of $v$. Now, let $\varepsilon > 0$ be arbitrary. By \eqref{eq:member-H0v-Dirichlet}, find $R \in (0,1)$ so that $\abs{f_{\Omega}(z)}v(z) < \varepsilon$ for all $z \in \mathcal{D}$ with $\abs{z} \geq R$. Let us define
$$A = \{n \in \N : \abs{w_n} < R\}, \qquad B = \N \setminus A.$$
If $n \in B$, then $\abs{f_{\Omega}(z_n)}v(z_n) \leq \abs{f_{\Omega}(w_n)}v(w_n) \leq \varepsilon$. On the other hand, there exists $C > 0$ so that $\abs{f_{\Omega}(z_n)} \leq C$ for all $n \in A$. But, since $\abs{z_n} \to 1$, as $n \to \infty$, it holds that $v(z_n) \leq \varepsilon/C$ for all $n \in A$ up to a finite number of terms. Summing up, we have shown that
$$\abs{f_{\Omega}(z_n)}v(z_n) \leq \varepsilon$$
for all $n \in \N$ up to a finite number of terms. Since $\varepsilon > 0$ is arbitrary, this proves the claim.

Let $z \in \mathcal{D}$ and let $w = f_{\Omega}(z)$. Recall that
$$\abs{f_{\Omega}(z)}v(z) = \abs{w}v\left(\dfrac{e^{d_{\Omega}(0,w)}-1}{e^{d_{\Omega}(0,w)}+1}\right).$$
We claim that
\begin{align*}
\limsup_{\substack{\abs{z} \to 1 \\ z \in \mathcal{D}}}\abs{f_{\Omega}(z)}v(z) & = \limsup_{\substack{w \to \infty \\ w \in f_{\Omega}(\mathcal{D})}}\abs{w}v\left(\dfrac{e^{d_{\Omega}(0,w)}-1}{e^{d_{\Omega}(0,w)}+1}\right) \\
& = \limsup_{\substack{w \to \infty \\ w \in \Omega}}\abs{w}v\left(\dfrac{e^{d_{\Omega}(0,w)}-1}{e^{d_{\Omega}(0,w)}+1}\right).    
\end{align*}
The second equality simply follows because $f_{\Omega}(\mathcal{D})$ is dense in $\Omega$. For the first equality, it is obvious that
$$\limsup_{\substack{\abs{z} \to 1 \\ z \in \mathcal{D}}}\abs{f_{\Omega}(z)}v(z) \geq \limsup_{\substack{w \to \infty \\ w \in f_{\Omega}(\mathcal{D})}}\abs{w}v\left(\dfrac{e^{d_{\Omega}(0,w)}-1}{e^{d_{\Omega}(0,w)}+1}\right).$$

For the converse inequality, let $\{z_n\} \subset \mathcal{D}$ be a sequence with $\abs{z_n} \to 1$, as $n \to \infty$, such that
$$L = \limsup_{\substack{\abs{z} \to 1 \\ z \in \mathcal{D}}}\abs{f_{\Omega}(z)}v(z)  = \lim_{n \to \infty}\abs{f_{\Omega}(z_n)}v(z_n).$$
If $\sup_{n \in \N}\abs{f_{\Omega}(z_n)} < +\infty$, then $L = 0$ since $v$ is typical, and then the claim holds.
Otherwise, $\sup_{n \in \N}\abs{f_{\Omega}(z_n)} = +\infty$. By taking a subsequence if needed, we may suppose that $\lim_{n \to \infty}f_{\Omega}(z_n) = \infty$. But then, by construction, we have that
$$L = \lim_{n \to \infty}\abs{f_{\Omega}(z_n)}v(z_n) \leq \limsup_{\substack{w \to \infty \\ w \in \Omega}}\abs{w}v\left(\dfrac{e^{d_{\Omega}(0,w)}-1}{e^{d_{\Omega}(0,w)}+1}\right),$$
and so the claim also holds.

It is clear that the proof follows from the above claims.
\end{proof}
\end{theorem}

It is worth pointing out that there are similar results dealing with the membership of conformal maps (i.e., in the setting of simply connected domains) to Hardy spaces \cite[Theorem 1.1]{K-Hardy} and Bergman spaces \cite[Theorem 1.1]{BKK} in terms of the hyperbolic distance. For the general setting of hyperbolic domains and the membership of their universal covering maps in Hardy spaces, we also refer to \cite[Lemma 3.2]{BCZ}. As far as we know, such a general result for Bergman spaces is not currently known.

\section{The growth number of a domain}
\label{sec:growth-number}
In this section we mainly deal with the weights $v_{\alpha}$, $\alpha > 0$, in order to study the growth number of a domain, and we prove Theorem \ref{thm:growth-number}. Recall that such weights are typical and, in fact, they also satisfy that $v_{\alpha} = \widetilde{v_{\alpha}}$ (see \cite[Examples 1.7]{BBT}). In particular, all of them satisfy \eqref{eq:continuity-wtv}. In other words, all weights $v_{\alpha}$, $\alpha > 0$, satisfy the hypotheses in Theorem \ref{thm:univconmap-Hinftyv}.

To this extent, we first notice that, if $v$ is a typical weight, then $\mathrm{Hol}(\D,\Omega) \subset \mathrm{H}^{\infty}_v$ whenever $\Omega \subset \C$ is a bounded domain. As a consequence, $\g(\Omega) = 0$ whenever $\Omega$ is bounded. Thus, we can restrict our interest to unbounded domains. Moreover, the cases in which $\Omega$ is not a hyperbolic domain are also covered by the following result:
\begin{lemma}
\label{lemma:growth-bounded-complement}
Let $\Omega \subset \C$ be a domain such that $\C \setminus \Omega$ is bounded. Then, $\g(\Omega) = +\infty$.
\begin{proof}
The proof is analogous to the ones given in the setting of the Hardy spaces \cite[Lemma 2.3.(4)]{KimSugawa} and the Bergman spaces \cite[Lemma 2.2]{Karafyllia}.

Let $\alpha > 0$. Since $\mathrm{H}^{\infty}_{v_{\alpha}}$ is a vector space, we may assume that $\C \setminus \Omega \subset \D$. In that case, it is easy to see that the map $z \mapsto \exp((1+z)/(1-z))$ belongs to $\mathrm{Hol}(\D,\Omega)$ but not to $\mathrm{H}^{\infty}_{v_{\alpha}}$, since
$$\lim_{x \to 1^-} (1-x^2)^{\alpha}\exp\left(\dfrac{1+x}{1-x}\right) = +\infty.$$
\end{proof}
\end{lemma}

In order to discuss the case of unbounded hyperbolic domains, we now re-state Theorem \ref{thm:univconmap-Hinftyv} in the case that $v = v_{\alpha}$.
\begin{corollary}
\label{cor:va-hypdist}
Let $\Omega \subset \C$ be a hyperbolic domain, and let $\alpha > 0$. Assume that $0 \in \Omega$. The following facts are equivalent:
\begin{enumerate}[\hspace{0.5cm} \rm (a)]
\item $\mathrm{Hol}(\D,\Omega) \subset \mathrm{H}^{\infty}_{v_{\alpha}}$,
\item $\displaystyle\sup_{w \in \Omega }\abs{w} e^{-\alpha d_{\Omega}(0,w)}< +\infty$. That is, there exists $C > 0$ such that
$$d_{\Omega}(0,w) \geq \dfrac{1}{\alpha}\log\abs{w} - C, \qquad  w \in \Omega.$$
\end{enumerate}
\begin{proof}
It follows at once from Theorem \ref{thm:univconmap-Hinftyv} simply by noticing that
$$e^{-\alpha d_{\Omega}(0,w)} \leq v_{\alpha}\left(\dfrac{e^{d_{\Omega}(0,w)}-1}{e^{d_{\Omega}(0,w)}+1}\right) = \left(\dfrac{2}{e^{d_{\Omega}(0,w)}+1}\right)^{\alpha} \leq 2^{\alpha}e^{-\alpha d_{\Omega}(0,w)},$$
since
$$e^{d_{\Omega}(0,w)} \leq e^{d_{\Omega}(0,w)}+1 \leq 2e^{d_{\Omega}(0,w)}, \qquad w \in \Omega.$$
\end{proof}
\end{corollary}


We now prove Theorem \ref{thm:growth-number}.
\begin{proof}[Proof of Theorem \ref{thm:growth-number}]
The equality between the two upper limits is obvious. To relate them with the growth number of $\Omega$, we proceed as follows:

Let
$$L = \limsup_{\substack{w \to \infty \\ w \in \Omega}}\dfrac{\log(\abs{w})}{d_{\Omega}(0,w)},$$
and let
$$I = \inf(\{\alpha > 0 : \sup_{w \in \Omega}\exp(-\alpha d_{\Omega}(0,w))\abs{w} < +\infty\}).$$
By Corollary \ref{cor:va-hypdist}, it is clear that
$\g(\Omega) = I$. 
So, it suffices to prove that $L=I$.
To do so, first assume that $\alpha > L$. This means that there exists $R > 0$ so that, for all $w \in \Omega$ with $\abs{w} > R$ we have that
$$\dfrac{\log(\abs{w})}{d_{\Omega}(0,w)} < \alpha.$$
That is, if $w \in \Omega$ with $\abs{w} > R$, then
$$e^{-\alpha d_{\Omega}(0,w)}\abs{w} < 1.$$
In particular,
$$\sup_{w \in \Omega}e^{-\alpha d_{\Omega}(0,w)}\abs{w} < +\infty,$$
which means that $\alpha \geq I$. Since $\alpha > L$ is arbitrary, we see that $L \geq I$. 

Finally, assume that $\alpha < L$. This means that there exist $\varepsilon > 0$ and a sequence $w_n \in \Omega$, $n \in \N$, such that
$$\lim_{n \to \infty}w_n = \infty, \qquad \dfrac{\log(\abs{w_n})}{d_{\Omega}(0,w_n)} > \alpha + \varepsilon.$$
In other words,
$$e^{-\alpha d_{\Omega}(0,w_n)}\abs{w_n} > e^{\varepsilon d_{\Omega}(0,w_n)} \to +\infty, \qquad \text{as } n \to \infty.$$
In particular,
$$\sup_{w \in \Omega}e^{-\alpha d_{\Omega}(0,w)}\abs{w} = +\infty,$$
which means that $\alpha \leq I$. Since $\alpha < L$ is arbitrary, we see that $L \leq I$. 
\end{proof}

Following the spirit of Theorem \ref{thm:H0v-limsup0}, for $\alpha > 0$ we notice that
$$\mathrm{H}^{\infty}_{v_{\beta}} \subset \mathrm{H}^0_{v_{\alpha}} \subset \mathrm{H}^{\infty}_{v_{\alpha}}, \qquad \beta < \alpha.$$
Therefore, for every domain $\Omega \subset \C$ we have that
$$\g(\Omega) = \inf(\{\alpha > 0 : \mathrm{Hol}(\D,\Omega) \subset \mathrm{H}^0_{v_{\alpha}}\}),$$
where $\inf(\emptyset) = +\infty$.

\section{Domains with a prefixed growth number}
\label{sec:examples}
This section is devoted to establish Theorem \ref{thm:examples}.(a). In the case of Hardy spaces, the analogue result was first proven in \cite[Theorem 5.2]{CCZKRP}. In view of Theorem \ref{thm:growth-number}, the latter result will be established by constructing domains with a controlled hyperbolic metric. 

We first notice that examples in the range $0 \leq \g(\Omega) \leq 2$ are easy to construct via simply connected domains. In particular, we establish a universal upper bound for their growth number.
\begin{lemma}
\label{lemma:growth-examples-simpconn}
Let $\Omega \subsetneq \C$ be a simply connected domain. Then, $\g(\Omega) \leq 2$.
Moreover, for every $\alpha \in [0,2]$ there exists a simply connected domain $\Omega \subsetneq \C$ such that $\g(\Omega) =~\alpha$.
\begin{proof}
Let $f_{\Omega}$ be a conformal map of a simply connected domain $\Omega \subsetneq \C$. By suitably translating $\Omega$ (notice that this leaves the growth number invariant), we may assume that $f_{\Omega}(0) = 0$. We also may assume that $f'_{\Omega}(0) > 0$. In that case, the classical Koebe Distortion Theorem \cite[Theorem 1.6]{PommerenkeUnivFun} asserts that
$$\abs{\dfrac{f_{\Omega}(z)}{f'_{\Omega}(0)}} \leq \dfrac{\abs{z}}{(1-\abs{z})^2}, \qquad z \in \D.$$
It easily follows that $f_{\Omega} \in \mathrm{H}^{\infty}_{v_{\alpha}}$ for all $\alpha \geq 2$, and so the first part of the result follows from Theorem \ref{thm:univconmap-Hinftyv}.

For the second part of the lemma, let $\gamma \in (0,2]$ and define $f_{\gamma} \colon \D \to \C$ given by
$$f_{\gamma}(z) = \left(\dfrac{1+z}{1-z}\right)^\gamma, \qquad z \in \D.$$
Notice that $f_{\gamma}$ is univalent, and so it is the conformal map of the simply connected domain $\Omega_{\gamma} = f_{\gamma}(\D)$, which is an angular region. It is straightforward to see that $f_{\gamma} \in \mathrm{H}^{\infty}_{v_{\alpha}}$ if and only if $\alpha \geq \gamma$. Therefore, using Theorem \ref{thm:univconmap-Hinftyv}, we conclude that $\g(\Omega_{\gamma}) = \gamma \in (0,2]$.

The remaining case $\g(\Omega) = 0$ is achieved, for example, by bounded domains.
\end{proof}
\end{lemma}


The examples in the range $2 < \g(\Omega) < +\infty$ are more involved. To introduce them, let $\gamma > 1$ and define
\begin{equation}
\label{eq:domains}
\Omega_{\gamma} = \C \setminus (\{0\} \cup \{-\gamma^n : n \in \Z\}).
\end{equation}
Notice that $\gamma\Omega_{\gamma} = \Omega_{\gamma}$. We start by proving the following lemma:
\begin{lemma}
\label{lemma:g-gamma}
With the notations above,
$$\g(\Omega_{\gamma}) = \dfrac{\log(\gamma)}{d_{\Omega_{\gamma}}(1,\gamma)}, \qquad \gamma > 1.$$
\begin{proof}
Let $\widetilde{\Omega}_{\gamma} = \Omega_{\gamma} - 1$. Then  $\g(\widetilde{\Omega}_{\gamma}) = \g(\Omega_{\gamma})$. By Theorem \ref{thm:growth-number}, we know that 
$$\g(\Omega_{\gamma}) =\g(\widetilde{\Omega}_{\gamma}) = \limsup_{R \to + \infty}\dfrac{\log(R)}{d_{\widetilde{\Omega}_{\gamma}}(0,\widetilde{F}_R)},$$
where $\widetilde{F}_R = \{z \in \widetilde{\Omega}_{\gamma} : \abs{z} = R\}$, $R > 0$.

Since $\widetilde{\Omega}_{\gamma}$ is invariant by circular symmetrization, we can use \cite[Theorem 2]{W} to deduce that $d_{\widetilde{\Omega}_{\gamma}}(0,\widetilde{F}_R) = d_{\widetilde{\Omega}_{\gamma}}(0,R) = d_{\Omega_{\gamma}}(1,R+1)$ for $R > 0$. Analogously, we can also see that the segment $[a,b]$ is a hyperbolic geodesic of $\Omega_{\gamma}$ whenever $a,b > 1$. In particular, we deduce that
\begin{equation}
\label{eq:correction1}
\g(\Omega_{\gamma}) = \limsup_{R \to + \infty}\dfrac{\log(R)}{d_{\Omega_{\gamma}}(1,R+1)}= \limsup_{R \to + \infty}\dfrac{\log(R)}{d_{\Omega_{\gamma}}(1,R)}.
\end{equation}
Moreover, for $R > \gamma$, 
$$d_{\Omega_{\gamma}}(1,R) = \sum_{n = 1}^N d_{\Omega_{\gamma}}(\gamma^{n-1},\gamma^n) + d_{\Omega_{\gamma}}(\gamma^N,R),$$
being $N = \left\lfloor \log(R) / \log(\gamma) \right\rfloor$, where $ \left\lfloor x \right\rfloor$ denotes the greatest integer smaller than or equal to $x \in \R$. Since $\gamma\Omega _{\gamma} = \Omega_{\gamma}$, we have that $d_{\Omega_{\gamma}}(\gamma^{n-1},\gamma^n) = d_{\Omega_{\gamma}}(1,\gamma)$. Summing up,
\begin{equation}
\label{eq:correction2}
d_{\Omega_{\gamma}}(1,R) = N d_{\Omega_{\gamma}}(1,\gamma) + d_{\Omega_{\gamma}}(\gamma^N,R),
\end{equation}
where $0 \leq d_{\Omega_{\gamma}}(\gamma^N,R) = d_{\Omega_{\gamma}}(1,R/\gamma^N) < d_{\Omega_{\gamma}}(1,\gamma)$. Then, the result follows at once from \eqref{eq:correction1} and \eqref{eq:correction2}.
\end{proof}
\end{lemma}

We now present the main argument towards the proof of Theorem \ref{thm:examples}.
\begin{lemma}
\label{lemma:g-Omega-gamma}
With the notations above, we have that:
\begin{enumerate}[\hspace{0.5cm} \rm (a)]
\item The map $\gamma \mapsto \g(\Omega_{\gamma})$, $\gamma > 1$, is continuous.
\item $\lim_{\gamma \to +\infty}\g(\Omega_{\gamma}) = +\infty$.
\item $\lim_{\gamma \to 1^+}\g(\Omega_{\gamma}) = 2$.
\end{enumerate}
\begin{proof}

(a) Using Lemma \ref{lemma:g-gamma}, it is enough to prove that the map $\gamma \mapsto d_{\Omega_{\gamma}}(1,\gamma)$, $\gamma > 1$, is continuous.
To do so, fix $\gamma > 1$ and let $\gamma_n \in (1,+\infty)$ be a sequence with $\gamma_n \to \gamma$, as $n \to \infty$. It is clear that the kernel of $\{\Omega_{\gamma_n}\}$ with respect to $1$ is $\Omega_{\gamma}$. In particular, it is easy to check that $\{\Omega_{\gamma_n}\}$ converges to its kernel with respect to $1$. Then, the result follows from Corollary \ref{cor:distance-convergence} at once.

(b) Let $\gamma > 1$. If $x \in [1,\gamma]$, then $\mathrm{dist}(x,\partial\Omega_\gamma ) = x$ and 
$$\beta_{\Omega_{\gamma}}(x) = \inf\left\lbrace\abs{\log\left(\dfrac{x}{\gamma^n}\right)} : n \in \Z \right\rbrace = \min \left\lbrace \log(x), \, \log\left(\dfrac{\gamma}{x}\right)\right\rbrace,$$
where $\beta_{\Omega_{\gamma}}$ is the function defined in Subsection \ref{sec:BeardonPommerenke}. Therefore, applying the estimate in Theorem \ref{thm:BeardonPommerenke}, we have that
\begin{align*}
d_{\Omega_{\gamma}}(1,\gamma) & = \int_1^{\gamma} \lambda_{\Omega_{\gamma}}(x)dx \leq C\int_1^{\gamma}\dfrac{dx}{\mathrm{dist}(x,\partial\Omega_{\gamma})(\kappa+\beta_{\Omega_{\gamma}}(x))} \\
& = C\left(\int_1^{\sqrt{\gamma}}\dfrac{dx}{x(\kappa+\log(x))} + \int_{\sqrt{\gamma}}^{\gamma}\dfrac{dx}{x(\kappa+\log(\gamma)-\log(x))}\right) \\
& = 2C\left(\log\left(\kappa+\dfrac{1}{2}\log(\gamma)\right)-\log(\kappa)\right),
\end{align*}
where $C = \kappa+\pi/4 > 0$. In particular,
$$\lim_{\gamma \to +\infty} \dfrac{\log(\gamma)}{d_{\Omega_{\gamma}}(1,\gamma)} = +\infty,$$
and so the result follows from Lemma \ref{lemma:g-gamma}.

(c) Let $\gamma_n > 1$ forming a sequence such that $\gamma_n \to 1$, as $n \to +\infty$. Notice that for every $x \in (-\infty,0)$, there exists a sequence $\{x_n\}$ such that $x_n \not\in\Omega_{\gamma_n}$ for all $n \in \N$, but $x_n \to x$, as $n \to \infty$. Therefore, the kernel of $\{\Omega_{\gamma_n}\}$ with respect to $1$ is $\Omega := \C \setminus (-\infty,0]$, and $\{\Omega_{\gamma_n}\}$ converges to $\Omega$.

Since $\gamma_n \to 1$ as $n \to \infty$, for every $n \in \N$ large enough there exists $m_n \in \N$ with $\gamma_n^{m_n} \leq 2 < \gamma_n^{m_n+1}$. Notice that
$$2-\gamma_n^{m_n} \leq \gamma_n^{m_n+1}-\gamma_n^{m_n} = \gamma_n^{m_n}(\gamma_n-1) \leq 2(\gamma_n-1),$$
and so $\gamma_n^{m_n} \to 2$ as $n \to \infty$. Moreover, using again that $\gamma_n\Omega _{\gamma_n} = \Omega_{\gamma_n}$, we have
$$
d_{\Omega_{\gamma_n}}(1,\gamma_n^{m_n}) = \sum_{k = 1}^{m_n} d_{\Omega_{\gamma_n}}(\gamma_n^{k-1},\gamma_n^k)= \sum_{k = 1}^{m_n} d_{\Omega_{\gamma_n}}(1,\gamma_n)= m_nd_{\Omega_{\gamma_n}}(1,\gamma_n).
$$
Thus,
\begin{align}
\label{eq:referee}
\lim_{n \to +\infty}\dfrac{\log(\gamma_n)}{d_{\Omega_{\gamma_n}}(1,\gamma_n)} = \lim_{n \to +\infty}\dfrac{m_n\log(\gamma_n)}{m_nd_{\Omega_{\gamma_n}}(1,\gamma_n)} = \lim_{n \to +\infty}\dfrac{\log(\gamma_n^{m_n})}{d_{\Omega_{\gamma_n}}(1,\gamma_n^{m_n})} = \dfrac{\log(2)}{d_{\Omega}(1,2)},
\end{align}
where we have also used Corollary \ref{cor:distance-convergence}. But
$$d_{\Omega}(1,x) = d_{\D}\left(0,\dfrac{\sqrt{x}-1}{\sqrt{x}+1}\right) = \dfrac{1}{2}\log(x), \qquad x > 1,$$
where we have used \eqref{eq:hypdist-D} and \eqref{eq:domega-inf}. So, $d_\Omega (1,2) = \log(2)/2$. This in combination with \eqref{eq:referee} and Lemma \ref{lemma:g-gamma} gives $\lim_{\gamma \to 1^+} \g(\Omega_\gamma)=2$.
\end{proof}
\end{lemma}

Now, Theorem \ref{thm:examples}.(a) follows by Lemma \ref{lemma:growth-examples-simpconn} and Lemma \ref{lemma:g-Omega-gamma}.
\begin{proof}[Proof of Theorem \ref{thm:examples}.(a)]
For $\alpha = +\infty$, simply choose $\Omega = \C$. For $\alpha \in [0,2]$, see Lemma \ref{lemma:growth-examples-simpconn}. For $\alpha \in (2,+\infty)$, see Lemma \ref{lemma:g-Omega-gamma}.
\end{proof}

\section{Relation between the Bergman number and the growth number of a domain}
\label{sec:bergman}
This section is devoted to prove Theorem \ref{thm:bergman-growth} using Theorem \ref{thm:growth-number} and the inclusion among Bergman spaces and growth spaces. We also finish the proof of Theorem \ref{thm:examples}. Before doing so, we recall that a different (but related) definition of the Bergman number of a domain $\Omega \subset \C$ was proposed in \cite{BCZ}. Namely, for $\alpha > -1$, we define
$$\b_{\alpha}(\Omega) = \sup(\{p > 0: \mathrm{Hol}(\D,\Omega) \subset A^p_{\alpha}\}),$$
where we use the convention that $\sup(\emptyset) = 0$.
For every domain $\Omega \subset \C$ it is possible to argue \cite[Lemma 2.14]{BCZ} that
\begin{equation}
\label{eq:BCZ-bab}
\dfrac{\b_{\alpha}(\Omega)}{\alpha+2} \leq \b(\Omega), \qquad \alpha > -1.
\end{equation}
We will refer to both $\b(\Omega)$ and $\b_{\alpha}(\Omega)$ as the Bergman number of $\Omega$, trying to avoid confusion by providing the appropriate context.

We begin by reviewing the following inclusions.
\begin{lemma}
\label{lemma:growth-bergman-inclusions}
The following inclusions hold:
\begin{enumerate}[\hspace{0.5cm}\rm(a)]
\item Let $p >0$ and $\alpha > -1$. Then, $A^p_{\alpha} \subset \mathrm{H}^{\infty}_{v_{\beta}}$, where $\beta = (\alpha+2)/p$.
\item Let $\beta > 0$. Then, $\mathrm{H}^{\infty}_{v_{\beta}} \subset A^p_{\alpha}$ for all $p > 0$ and $\alpha > -1$ such that $p/(\alpha+1) < 1/\beta$.
\end{enumerate}
\begin{proof}
(a) It follows from \cite[Proposition 7.1.1 and p. 148]{Vukotic}.

(b) If $f \in \mathrm{H}^{\infty}_{v_{\beta}}$, then
$$\int_{\D}(1-\abs{z}^2)^{\alpha}\abs{f(z)}^pdA(z) \leq \int_{\D}\dfrac{MdA(z)}{(1-\abs{z}^2)^{\beta p - \alpha}} = 2\pi M \int_0^1\dfrac{rdr}{(1-r^2)^{\beta p -\alpha}},$$
for some $M > 0$. Then, $f \in A^p_{\alpha}$ whenever $\beta p-\alpha < 1$.
\end{proof}
\end{lemma}

The latter inclusions yield the following consequence.
\begin{lemma}
\label{lemma:estimates}
Let $\Omega \subset \C$ be a domain and $\alpha > -1$. The following facts hold:
\begin{enumerate}[\hspace{0.5cm}\rm(a)]
\item $\g(\Omega) = +\infty$ if and only if $\b_{\alpha}(\Omega) = 0$, and if and only if $\b(\Omega) = 0$.
\item $\g(\Omega) = 0$ if and only if $\b_{\alpha}(\Omega) = +\infty$, and if and only if $\b(\Omega) = +\infty$.
\item If $0 < \g(\Omega) < +\infty$, then $\alpha + 1 \leq \g(\Omega)\b_{\alpha}(\Omega) \leq \alpha + 2$.
\item If $0 < \g(\Omega) < +\infty$, then $\g(\Omega)\b(\Omega) = 1$.
\end{enumerate}
\begin{proof}
(a) Notice that $\g(\Omega) = +\infty$ if and only if $\mathrm{Hol}(\D,\Omega) \not\subset \mathrm{H}^{\infty}_{v_{\beta}}$ for any $\beta > 0$. By Lemma \ref{lemma:growth-bergman-inclusions}, this is also equivalent to $\mathrm{Hol}(\D,\Omega) \not\subset A^p_{\alpha}$ for any $p > 0$, which holds if and only if $\b_{\alpha}(\Omega) = 0$.
Arguing similarly, one also proves that the latter is equivalent to $\b(\Omega) = 0$.

(b) The proof follows by reasoning as above.

(c) Since $0 < \g(\Omega) < +\infty$, by (a) and (b) we have that $0 < \b_{\alpha}(\Omega) < +\infty$. In that case, choose $0 < p < \b_{\alpha}(\Omega)$. If $f \in \mathrm{Hol}(\D,\Omega)$, we have that $f \in A^p_{\alpha}$, which yields that $f \in \mathrm{H}^{\infty}_{v_{(\alpha+2)/p}}$ by Lemma \ref{lemma:growth-bergman-inclusions}. Since $f$ is arbitrary, we deduce that $\g(\Omega) \leq (\alpha+2)/p$. Letting $p \to \b_{\alpha}(\Omega)$ we deduce that
$$\g(\Omega) \leq \dfrac{\alpha+2}{\b_{\alpha}(\Omega)}.$$

Analogously, let $\beta > \g(\Omega)$. By definition, if $f \in \mathrm{Hol}(\D,\Omega)$, we have that $f \in \mathrm{H}^{\infty}_{v_{\beta}}$. By Lemma \ref{lemma:growth-bergman-inclusions}, $f \in A^{p_{\varepsilon}}_{\alpha}$ for all $\varepsilon > 0$, where $p_{\varepsilon} = (\alpha+1)/(\beta+\varepsilon)$. Since $f$ is arbitrary, we have that $\b_{\alpha}(\Omega) \geq p_{\varepsilon}$. Letting $\varepsilon \to 0$ and $\beta \to \g(\Omega)$ we deduce that
$$\b_{\alpha}(\Omega) \geq \dfrac{\alpha+1}{\g(\Omega)}.$$

(d) Since $0 < \g(\Omega) < +\infty$, by (a) and (b) we have that $0 < \b(\Omega) < +\infty$. It follows from (c) and \eqref{eq:BCZ-bab} that, for every $\alpha > -1$, we have that
$$\g(\Omega)\b(\Omega) \geq \g(\Omega)\dfrac{\b_{\alpha}(\Omega)}{\alpha+2} \geq \dfrac{\alpha+1}{\alpha+2}.$$
Letting $\alpha \to +\infty$ we deduce that $\g(\Omega)\b(\Omega) \geq 1$.

For the reverse inequality, let $0 < q < \b(\Omega)$. By definition, for every $f \in \mathrm{Hol}(\D,\Omega)$ there exist $p > 0$ and $\alpha > -1$ with $p/(\alpha+2) = q$ such that $f \in A^p_{\alpha}$. By Lemma \ref{lemma:growth-bergman-inclusions}, $f \in \mathrm{H}^{\infty}_{v_{(\alpha+2)/p}} = \mathrm{H}^{\infty}_{v_{1/q}}$. Since $f$ is arbitrary, we conclude that $\g(\Omega) \leq 1/q$. Letting $q \to \b(\Omega)$, we deduce that $\g(\Omega) \leq 1/\b(\Omega)$, that is,  $\g(\Omega)\b(\Omega) \leq 1$.
\end{proof}
\end{lemma}

\begin{proof}[Proof of Theorem \ref{thm:examples}.(b)]
The above lemma shows that Theorem \ref{thm:examples}.(b) is a direct consequence of Theorem \ref{thm:examples}.(a).
\end{proof}
We finish this section noticing that Theorem \ref{thm:bergman-growth} is a consequence of Theorem \ref{thm:growth-number} and the previous lemma. We also highlight that  Theorem \ref{thm:growth-number} and Lemma \ref{lemma:estimates} imply that
$$\dfrac{\alpha+1}{\alpha+2}\b(\Omega) \leq \dfrac{\b_{\alpha}(\Omega)}{\alpha+2} \leq \b(\Omega),$$
for every domain $\Omega \subset \C$ and every $\alpha > -1$. In particular,
$$\lim_{\alpha \to +\infty}\dfrac{\b_{\alpha}(\Omega)}{\alpha+2} = \b(\Omega).$$

\section{Decoupling the Hardy and the Bergman number of a domain}
\label{sec:questions}
As commented in the introduction, Karafyllia and Karamanlis \cite{KK} proved that the Bergman number and the Hardy number of simply connected domains coincide. To elaborate on this result, let us recall that, for $p > 0$, the Hardy space $\mathrm{H}^p$ is the collection of all holomorphic maps $f \colon \D \to \C$ such that
$$\sup_{0 < r < 1}\int_{0}^{2\pi}\abs{f(re^{it})}^pdt < +\infty.$$
Using these spaces, for a domain $\Omega \subset \C$, we define the Hardy number of $\Omega$ as
$$\h(\Omega) = \sup(\{p > 0 : \mathrm{Hol}(\D,\Omega) \subset \mathrm{H}^p\}),$$
where we use the convention that $\sup(\emptyset) = 0$.

As we said before, one of the motivations for the results in Section \ref{sec:examples} is that only recently it was proved that for every $p \in [0,+\infty]$ there exists a domain $\Omega \subset \C$ such that $\h(\Omega) = p$ (see \cite[Theorem 5.2]{CCZKRP}). Possibly, the reason behind this is that the literature about this topic mainly concerns the case of simply connected domains. However, as a consequence of \cite[Theorem 3.16]{Duren-Hp}, it is known that $\h(\Omega) \geq 1/2$ whenever $\Omega \subsetneq \C$ is a simply connected domain (see also \cite[Lemma 2.3.(5)]{KimSugawa}). 

A similar question can be posed in the setting of the Bergman number of general domains. To introduce it, we reformulate a remarkable theorem of Karafyllia and Karamanlis \cite[Theorem 1.3.(3)]{KK}. Namely, consider a simply connected domain $\Omega \subsetneq \C$ with $0 \in \Omega$. By \cite[Theorem 1.1]{K-HypDist}, \cite[Eq. (1.5)]{KK}, \cite[Lemmas 2.12, 2.13, and 2.14]{BCZ}, \cite[Theorem 1.3.(3)]{KK} (or Theorem \ref{thm:bergman-growth}), and the conformal invariance of the hyperbolic distance (see Remark \ref{remark:simply-connected} and \eqref{eq:domega-inf}) we have that
$$\h(\Omega) = \dfrac{\b_{\alpha}(\Omega)}{\alpha+2} = \b(\Omega) = \liminf_{R \to +\infty}\dfrac{d_{\Omega}(0,F_R)}{\log(R)}, \qquad \alpha > -1,$$
where $F_R = \{w \in \Omega : \abs{w} = R\}$, $R > 0$.

All in all, it is clear that $\b(\Omega) = \b_{\alpha}(\Omega)/(\alpha+2) \geq 1/2$ whenever $\Omega \subsetneq \C$ is a simply connected domain and $\alpha > -1$. However, these lower bounds are not true in general, as it follows from Theorem \ref{thm:bergman-growth}, Theorem \ref{thm:examples}, and Lemma \ref{lemma:estimates}.(c). Indeed, the examples considered in \eqref{eq:domains} are of zero Hardy number. This can be seen as a consequence of \cite[Theorem 2.4.(i)]{KimSugawa}, for instance, since their complements are polar sets. Therefore, we can state the following result.
\begin{corollary}
\label{cor:examples}
For every $p \in (0,1/2)$ there exists a domain $\Omega \subset \C$ with the following properties:
\begin{enumerate}[\hspace{0.5cm}\rm (a)]
\item $\h(\Omega) = 0$.
\item $\b(\Omega) = p$.
\item $(\alpha+1)p \leq \b_{\alpha}(\Omega) \leq (\alpha+2)p$, for all $\alpha > -1$. 
\end{enumerate}
\end{corollary}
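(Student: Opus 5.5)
The plan is to take $\Omega=\Omega_{\gamma}$ from \eqref{eq:domains}, with $\gamma>1$ tuned so that $\g(\Omega_\gamma)=1/p$. Since $p\in(0,1/2)$, we have $1/p\in(2,+\infty)$.

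First I fix $\gamma$. By Lemma \ref{lemma:g-Omega-gamma}, the map $\gamma\mapsto\g(\Omega_\gamma)$ is continuous on $(1,+\infty)$, tends to $2$ as $\gamma\to1^+$, and tends to $+\infty$ as $\gamma\to+\infty$. The intermediate value theorem then gives some $\gamma>1$ with $\g(\Omega_\gamma)=1/p$. This is exactly how Theorem \ref{thm:examples}.(a) was obtained in the range $(2,+\infty)$, so nothing new is needed here.

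Next I read off (b) and (c) from Lemma \ref{lemma:estimates}. Because $0<\g(\Omega_\gamma)=1/p<+\infty$, part (d) gives $\b(\Omega_\gamma)=1/\g(\Omega_\gamma)=p$, which is (b). Part (c) gives $\alpha+1\le \g(\Omega_\gamma)\b_\alpha(\Omega_\gamma)\le\alpha+2$ for every $\alpha>-1$. Multiplying by $p$ yields $(\alpha+1)p\le\b_\alpha(\Omega_\gamma)\le(\alpha+2)p$, which is (c).

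Finally, (a) is the only point that needs something beyond the paper's own machinery. The complement $\C\setminus\Omega_\gamma=\{0\}\cup\{-\gamma^n:n\in\Z\}$ is countable and closed, hence a polar set. By \cite[Theorem 2.4.(i)]{KimSugawa}, a domain with polar complement has zero Hardy number. The intuition is that $\mathrm{Hol}(\D,\Omega_\gamma)$ contains maps omitting only a polar set that are not in any $\mathrm{H}^p$; equivalently, the harmonic measure of $\{|w|>R\}$ does not decay. I expect the main obstacle to be checking that the hypotheses of that cited theorem apply as stated, namely that the complement is closed and polar and that the domain is hyperbolic. All three hold here, since the complement is countable and closed with at least two points, so the check is routine. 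Combining the three steps gives the corollary.
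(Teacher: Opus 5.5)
Your proposal is correct and is essentially the paper's own argument. You take $\Omega_\gamma$ from \eqref{eq:domains} with $\g(\Omega_\gamma)=1/p\in(2,+\infty)$, using the continuity and limits in Lemma \ref{lemma:g-Omega-gamma}; you read off (b) and (c) from Lemma \ref{lemma:estimates}.(d) and (c); and you obtain $\h(\Omega_\gamma)=0$ from \cite[Theorem 2.4.(i)]{KimSugawa} because the complement is polar. Your explicit intermediate-value step and your check that the complement is closed make precise what the paper leaves implicit.
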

We notice that this corollary gives a positive solution to \cite[Problem 7.4]{BCZ}, which asked about the existence of domains $\Omega \subset \C$ for which $0 \leq \h(\Omega) < \b(\Omega) < +\infty$.

Also, it follows from Theorem \ref{thm:bergman-growth} and Corollary \ref{cor:examples} that \cite[Theorem 1.1]{K-HypDist} does not hold for general domains.

\section{Bloch-type spaces and the Bloch number of a domain}
\label{sec:bloch}
In this section we analyze a new class of spaces. Namely, the Bloch-type spaces $\mathcal{B}_v$, where $v$ is a weight. We say that $f \in \mathrm{Hol}(\D,\C)$ belongs to $\mathcal{B}_v$ if
$$\sup_{z \in \D}\abs{f'(z)}v(z) < +\infty.$$
That is, $f \in \mathcal{B}_v$ if and only if $f' \in \mathrm{H}^{\infty}_v$. Notice that $\mathcal{B} = \mathcal{B}_{v_1}$ is the classical Bloch space.

The following classical theorem characterizes the domains associated with the Bloch space.
\begin{theorem}
\label{thm:classical-Bloch}
{\normalfont (\cite[Section 29, Theorem 6]{SeidelWalsh}; see also \cite[Theorem 2.6]{Cima})}
Let $\Omega \subset \C$ be a domain. The following facts are equivalent:
\begin{enumerate}[\hspace{0.5cm}\rm(a)]
\item $\mathrm{Hol}(\D,\Omega) \subset \mathcal{B}$,
\item $\sup_{w \in \Omega} R(w) < +\infty$,
\end{enumerate}
where $R(w) = \sup(\{r > 0 : D(w,r) \subset \Omega\})$ and $D(w,r) = \{z \in \C : \abs{w-z} < r\}$.
\end{theorem}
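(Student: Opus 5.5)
Both implications go through the universal covering map $f_{\Omega}$ and the hyperbolic density $\lambda_{\Omega}$, with two degenerate cases set aside first. If $\Omega$ is not hyperbolic, then $\Omega=\C$ or $\Omega=\C\setminus\{a\}$, so $\sup R=+\infty$. In that case $\mathrm{Hol}(\D,\Omega)$ contains maps outside $\mathcal{B}$; for instance $\exp(\exp((1+z)/(1-z)))$ omits $0$ and grows too fast along the radius to $1$. So (a) fails, as it should. For hyperbolic $\Omega$ we normalize $0\in\Omega$ and write every $f\in\mathrm{Hol}(\D,\Omega)$ as $f=f_{\Omega}\circ g$ by the lifting property (I).

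(b) $\Rightarrow$ (a). Suppose $\sup_{w} R(w)=M<+\infty$. Since $R(w)=\mathrm{dist}(w,\partial\Omega)$, the key step is to bound $\lambda_{\Omega}(w)$ from below by a multiple of $1/R(w)$. Once we have $\lambda_{\Omega}(w)\ge c/R(w)\ge c/M$, the Schwarz--Pick lemma for $f\in \mathrm{Hol}(\D,\Omega)$, i.e. $\lambda_{\Omega}(f(z))\abs{f'(z)}\le \lambda_{\D}(z)$, gives $\abs{f'(z)}(1-\abs{z}^2)\le 2M/c$. Hence $f\in\mathcal{B}$.

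For general hyperbolic domains, Theorem~\ref{thm:BeardonPommerenke} only gives $\lambda_{\Omega}(w)\,\mathrm{dist}(w,\partial\Omega)\ge 1/(2\sqrt2(\kappa+\beta_{\Omega}(w)))$, and $\beta_{\Omega}$ may be unbounded. This is the main obstacle, and there are two ways around it. The first is to bound $\beta_{\Omega}(w)$ under (b): take $a\in\partial\Omega$ nearest to $w$. Every disk of radius $>M$ meets $\C\setminus\Omega$, so there is a boundary point $b$ with $\abs{b-a}$ comparable to $\mathrm{dist}(w,\partial\Omega)$ up to the factor $M$. That controls $\abs{\log\abs{(w-a)/(b-a)}}$ only when $\mathrm{dist}(w,\partial\Omega)$ is not tiny, so small distances need separate care. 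The second, more robust way, which I would try first, avoids $\lambda_\Omega$ altogether: we only need $\abs{f'(z)}(1-\abs z^2)$ bounded, and this follows from a Bloch-type argument. If $\abs{f'(z_0)}(1-\abs{z_0}^2)$ were huge, then precomposing $f$ with the automorphism sending $0$ to $z_0$ gives a map $h\in\mathrm{Hol}(\D,\Omega)$ with $\abs{h'(0)}$ huge. Bloch's theorem then says $h(\D)\subset\Omega$ contains a disk of radius $\ge B\abs{h'(0)}$, where $B$ is Bloch's constant, and this contradicts $\sup R<\infty$. This handles all domains uniformly and gives $\abs{f'(z)}(1-\abs{z}^2)\le M/B$.

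(a) $\Rightarrow$ (b). Suppose $\sup R=+\infty$, so there are disks $D(w_n,r_n)\subset\Omega$ with $r_n\to\infty$. Put $f_n(z)=w_n+r_nz\in\mathrm{Hol}(\D,\Omega)$; then $\abs{f_n'(0)}=r_n$, so the Bloch seminorms over $\mathrm{Hol}(\D,\Omega)$ are unbounded. To produce a single function outside $\mathcal{B}$, the plan is a gluing argument: combine maps in one function so that the Bloch quantity blows up along a sequence $z_k\to\partial\D$. Concretely, choose $z_k\to1$ with pseudo-hyperbolically separated disks $\Delta_k$ around them. On a region containing $\Delta_k$, the map should behave like $w_{n_k}+r_{n_k}\phi_k$ for a disk automorphism $\phi_k$, and it must stay in $\Omega$ overall.

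A cleaner route uses the closed graph / uniform boundedness principle instead of an explicit construction. Assume (a). The set $\mathrm{Hol}(\D,\Omega)$ is a closed subset of the Fréchet space $\mathrm{Hol}(\D,\C)$. Its members have finite Bloch seminorm, so the Baire category theorem applied to the closed sets $\{f: \|f\|_{\mathcal B}\le N\}\cap\mathrm{Hol}(\D,\Omega)$ gives uniform boundedness on some relatively open piece. The $f_n$ above can be moved into such a piece (for instance by composing with dilations $z\mapsto tz$, $t$ close to $1$, and adding small perturbations inside $\Omega$), which contradicts unboundedness. Making this Baire step rigorous is the second delicate point; if it resists, I would fall back on citing \cite{SeidelWalsh,Cima} for this direction.
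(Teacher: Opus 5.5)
The paper does not prove this theorem; it quotes it from Seidel--Walsh and Cima, so I am comparing your proposal with the standard argument. Your direction (b) $\Rightarrow$ (a) is correct and is the classical proof: apply Bloch's (or Landau's) theorem to $h=f\circ\varphi_{z_0}$. Your handling of non-hyperbolic $\Omega$ is also fine. The gap is in (a) $\Rightarrow$ (b).

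The gluing plan there is only a sketch, and it leaves the central difficulty unaddressed: keeping the glued map inside $\Omega$. The Baire argument fails at exactly the step you flag as delicate. A minor point first: $\mathrm{Hol}(\D,\Omega)$ is not closed in $\mathrm{Hol}(\D,\C)$, because constants in $\partial\Omega$ are limits. By Hurwitz these are the only extra points, so this is repairable. The real problem is what Baire gives you. It only gives uniform boundedness of the Bloch seminorm on a set $\{g : \sup_{\abs{z}\le\rho}\abs{g-f_0}<\varepsilon\}$ around some unknown $f_0$. Your maps $f_n(z)=w_n+r_nz$ cannot be brought into that set by dilations or small perturbations. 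Small perturbations of $f_n$ stay near $f_n$, not near $f_0$, and $z\mapsto f_n(tz)$ is still far from $f_0$ on compacta. Producing a map in $\mathrm{Hol}(\D,\Omega)$ that is close to $f_0$ on compacta and has large Bloch quotient near $\partial\D$ is just the gluing problem again. So (a) $\Rightarrow$ (b) is not proved, and citing \cite{SeidelWalsh,Cima} as a fallback is not a proof.

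The missing idea is one you set up at the start and never used: the lifting property combined with Schwarz--Pick. For holomorphic $g\colon\D\to\D$ we have $\abs{g'(z)}(1-\abs{z}^2)\le 1-\abs{g(z)}^2$. Hence for $f=f_{\Omega}\circ g$,
\[
\abs{f'(z)}(1-\abs{z}^2)\le \abs{f_{\Omega}'(g(z))}\,(1-\abs{g(z)}^2)\le C:=\sup_{\zeta\in\D}\abs{f_{\Omega}'(\zeta)}(1-\abs{\zeta}^2).
\]
Under (a), $f_{\Omega}\in\mathcal{B}$, so $C<+\infty$. Therefore every $f\in\mathrm{Hol}(\D,\Omega)$ has Bloch seminorm at most $C$. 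Apply this to your own map $f(z)=w+rz$, where $D(w,r)\subset\Omega$: it gives $r=\abs{f'(0)}\le C$, hence $\sup_{w}R(w)\le C$.

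Equivalently, $f_{\Omega}\in\mathcal{B}$ means $\lambda_{\Omega}\ge 2/C$ on $\Omega$, while monotonicity gives $\lambda_{\Omega}(w)\le\lambda_{D(w,r)}(w)=2/r$. This is the same reduction to the universal covering map that the paper uses in Theorem~\ref{thm:univconmap-Hinftyv}. It makes both the gluing construction and the Baire category argument unnecessary.
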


Our purpose in this subsection is to complement the previous result in the setting of Bloch-type spaces for some weights $v$. The motivation comes from the following result of Lusky \cite{Lusky}, which was later refined by Abanin and Tien \cite{AT}. In order to present it, we recall that a weight $v$ is said to be essential if there exists $C > 0$ such that $\widetilde{v}(z) \leq Cv(z)$ for all $z \in \D$. 
In fact, a typical weight $v$ is essential if and only if $r \mapsto v(r)$ is equivalent to a log-convex function \cite[Theorem 2]{Bonet-Survey}.

\begin{theorem}
\label{thm:Lusky}
{\normalfont \cite[Theorems 2, 27, and 30]{Bonet-Survey}}
Let $v$ be a typical and essential weight. Suppose that there exists $k \in \N$ such that
\begin{equation}
\label{eq:Lusky}
\limsup_{n \to \infty}\dfrac{v(1-2^{-n-k})}{v(1-2^{-n})} < 1, \qquad \sup_{n \in \N}\dfrac{v(1-2^{-n})}{v(1-2^{-n-1})} < +\infty.
\end{equation}
Then, $\mathrm{H}^{\infty}_v = \mathcal{B}_{\hat{v}}$, where $\hat{v}(z) = (1-\abs{z})v(z)$, $z \in \D$.
\end{theorem}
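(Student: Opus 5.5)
The goal is to show $\mathrm{H}^{\infty}_v=\mathcal{B}_{\hat v}$: the two-sided comparison between $\sup_z\abs{f(z)}v(z)$ and $\sup_z\abs{f'(z)}(1-\abs{z})v(z)$, up to $\abs{f(0)}$, for typical essential weights satisfying \eqref{eq:Lusky}. Throughout, I write $r_n=1-2^{-n}$ and $A_n=\{r_{n}\le\abs{z}\le r_{n+1}\}$, and I work dyadic annulus by dyadic annulus.

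\emph{Inclusion $\mathrm{H}^{\infty}_v\subset\mathcal{B}_{\hat v}$.} I will use only the second (doubling) condition in \eqref{eq:Lusky}. Let $f\in\mathrm{H}^{\infty}_v$ and $z\in A_n$. Apply the Cauchy estimate on the disk $D(z,(1-\abs{z})/2)$; this disk is contained in $\{\abs{\zeta}\le r_{n+2}\}$. Since $v$ is non-increasing and doubling, $v(\zeta)\ge v(r_{n+2})\ge c\,v(z)$ on that disk. Hence
$$\abs{f'(z)}\le \frac{2}{1-\abs{z}}\,\frac{\|f\|}{c\,v(z)},$$
which gives $\sup_z\abs{f'(z)}\hat v(z)<\infty$.

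\emph{Inclusion $\mathcal{B}_{\hat v}\subset\mathrm{H}^{\infty}_v$.} Here the first condition in \eqref{eq:Lusky} is needed. Let $f\in\mathcal{B}_{\hat v}$ with norm $M$. Integrating $f'$ radially gives
$$\abs{f(r e^{it})}\le\abs{f(0)}+M\int_0^r\frac{ds}{(1-s)v(s)}.$$
On $[r_k,r_{k+1}]$ the integrand contributes at most $C/v(r_{k+1})$, because $\int_{r_k}^{r_{k+1}}ds/(1-s)=\log 2$. So for $r\in A_n$,
$$\abs{f(r e^{it})}\lesssim \abs{f(0)}+\sum_{k\le n}\frac{1}{v(r_{k+1})}.$$
The first condition in \eqref{eq:Lusky} says $v(r_{j+k})\le q\,v(r_j)$ for some $q<1$ and all large $j$. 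Together with the doubling condition, this makes $1/v(r_k)$ grow at least geometrically along the arithmetic progressions $j,j+k,j+2k,\dots$. The partial sum is therefore a sum of $k$ geometric series, each dominated by its last term, and
$$\sum_{k\le n}\frac{1}{v(r_{k+1})}\le C'\,\frac{1}{v(r_{n+1})}\le \frac{C''}{v(r)},$$
using doubling once more. Multiplying by $v(r)$ yields $f\in\mathrm{H}^{\infty}_v$.

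\emph{Role of essentiality.} The argument above does not use essentiality. In \cite{Bonet-Survey} it enters through the identification $\mathrm{H}^{\infty}_v=\mathrm{H}^{\infty}_{\widetilde v}$ with $\widetilde v\asymp v$, so the conditions may equivalently be stated for $\widetilde v$. I would note this and keep the direct proof.

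\emph{Main obstacle.} The hardest step is the geometric-sum estimate: with the limsup condition taken over blocks of length $k$, I have to handle the finitely many initial indices and the interleaving of the $k$ residue classes carefully, and the doubling bound is what transfers control between neighbouring indices.
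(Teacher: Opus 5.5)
Your proof is correct, but it does not follow the paper, which gives no proof here. The paper imports the statement from Bonet's survey. There, as the citation of Theorems 2, 27 and 30 indicates, it is assembled from two ingredients: the description of essential weights ($\widetilde v\asymp v$, equivalently $v$ is equivalent to a log-convex function), and the results of Lusky and Abanin--Tien on the differentiation operator $\mathrm{H}^{\infty}_v\to\mathrm{H}^{\infty}_{\hat v}$ and the integration operator in the reverse direction. You instead verify both inclusions by hand:
\begin{itemize}
\item The Cauchy estimate on $D(z,(1-\abs{z})/2)\subset\{\abs{\zeta}\le r_{n+2}\}$ plus doubling gives $\abs{f'(z)}(1-\abs{z})v(z)\le 2K^2\|f\|$, where $K=\sup_n v(r_n)/v(r_{n+1})$.
\item Radial integration over dyadic annuli plus the first condition in \eqref{eq:Lusky} gives $\sum_{j\le n+1}1/v(r_j)\lesssim 1/v(r_{n+1})\le K/v(r)$ for $r\in[r_n,r_{n+1}]$.
\end{itemize}
Your route is self-contained and generalizes the elementary argument for the weights $v_\alpha$ that the paper attributes to Zhu. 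It also shows that essentiality (and even continuity of $v$) plays no role in the equality itself. The cited route, in exchange, comes with operator-theoretic characterizations of boundedness of differentiation and integration, and hence with information on how far the two conditions are needed; your sufficiency argument does not address this, and the statement does not require it. Essentiality is still genuinely used elsewhere in the paper. In the Proposition right after this theorem, it is what transfers the doubling of $v$ to $\widetilde v$, i.e.\ \eqref{eq:continuity-wtv}, so that Theorem~\ref{thm:univconmap-Hinftyv} applies.

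Two cosmetic points. First, you use $k$ both as the fixed step in \eqref{eq:Lusky} and as a summation index; rename one of them. Second, the geometric-sum step you single out as the main obstacle is routine and needs no doubling. Suppose $v(r_{j+k})\le q\,v(r_j)$ for $j\ge j_0$. For each $0\le i<k$, the first condition alone gives $1/v(r_{n+1-i-mk})\le q^m/v(r_{n+1-i})$ whenever $n+1-i-mk\ge j_0$. Monotonicity then bounds this by $q^m/v(r_{n+1})$. The finitely many indices below $j_0$ contribute a constant, which is absorbed because $1/v(r_{n+1})\ge 1/v(0)$. Doubling enters only in the last comparison of $v(r)$ with $v(r_{n+1})$.
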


Notice that essential weights satisfying \eqref{eq:Lusky} also satisfy \eqref{eq:continuity-wtv}. Then, using Theorems \ref{thm:univconmap-Hinftyv} and \ref{thm:Lusky}, we obtain the following consequence, which complements Theorem \ref{thm:classical-Bloch}.
\begin{proposition}
Let $v$ be a typical and essential weight satisfying \eqref{eq:Lusky}, and let $\hat{v}(z) = (1-\abs{z})v(z)$, $z \in \D$. Let $\Omega \subset \C$ be a hyperbolic domain with universal covering map $f_{\Omega}$. Assume that $0 \in \Omega$. The following facts are equivalent:
\begin{enumerate}[\hspace{0.5cm}\rm(a)]
\item $\mathrm{Hol}(\D,\Omega) \subset \mathcal{B}_{\hat{v}}$,
\item $f_{\Omega} \in \mathcal{B}_{\hat{v}}$,
\item $\displaystyle\sup_{w \in \Omega}\abs{w}v\left(\dfrac{e^{d_{\Omega}(0,w)}-1}{e^{d_{\Omega}(0,w)}+1}\right) < +\infty$.
\end{enumerate}
\end{proposition}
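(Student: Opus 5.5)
The plan is to reduce everything to Theorem \ref{thm:univconmap-Hinftyv} via the identity $\mathrm{H}^{\infty}_v = \mathcal{B}_{\hat{v}}$ of Theorem \ref{thm:Lusky}. First I check that the hypotheses of Theorem \ref{thm:univconmap-Hinftyv} hold for $v$. The weight $v$ is typical by assumption. We also need all composition operators to be bounded on $\mathrm{H}^{\infty}_v$, i.e.\ condition \eqref{eq:continuity-wtv} for $\widetilde{v}$. Since $v$ is essential, $v \leq \widetilde{v} \leq Cv$, so
$$\dfrac{\widetilde{v}(1-2^{-n})}{\widetilde{v}(1-2^{-n-1})} \leq C\dfrac{v(1-2^{-n})}{v(1-2^{-n-1})}.$$
The right-hand side is bounded by the second condition in \eqref{eq:Lusky}. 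Theorem \ref{thm:continuity-Cg-Hinftyv} then gives the boundedness of every $C_g$ on $\mathrm{H}^{\infty}_v$.

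Next, by Theorem \ref{thm:Lusky}, $\mathrm{H}^{\infty}_v = \mathcal{B}_{\hat{v}}$ as sets of holomorphic functions on $\D$. Applying this equality to the single function $f_{\Omega}$ gives (b) $\Longleftrightarrow$ ($f_{\Omega} \in \mathrm{H}^{\infty}_v$). Applying it to every element of $\mathrm{Hol}(\D,\Omega)$ gives (a) $\Longleftrightarrow$ ($\mathrm{Hol}(\D,\Omega) \subset \mathrm{H}^{\infty}_v$). The three statements are then literally the three equivalent items (a), (b), (c) of Theorem \ref{thm:univconmap-Hinftyv}, and the proposition follows. As there, we may assume $f_{\Omega}(0)=0$, since condition (c) does not depend on the choice of covering map.

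The only point requiring care is the first step: transferring the doubling-type condition from $v$ to its associated weight $\widetilde{v}$. This uses both inequalities $v \leq \widetilde{v}$, which holds always, and $\widetilde{v} \leq Cv$, which is essentiality. Everything else is a direct substitution of the set equality of Theorem \ref{thm:Lusky} into Theorem \ref{thm:univconmap-Hinftyv}.
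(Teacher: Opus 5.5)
Your proposal is correct and is the paper's argument: the paper notes that essential weights satisfying \eqref{eq:Lusky} also satisfy \eqref{eq:continuity-wtv}, which your estimate $\widetilde{v}(1-2^{-n})/\widetilde{v}(1-2^{-n-1}) \leq C\,v(1-2^{-n})/v(1-2^{-n-1})$ spells out, and then combines Theorems \ref{thm:univconmap-Hinftyv} and \ref{thm:Lusky} exactly as you do. Your closing remark about assuming $f_{\Omega}(0)=0$ is unnecessary, since Theorem \ref{thm:univconmap-Hinftyv} already applies to any universal covering map; moreover, the item whose independence of the choice would matter is (b), not (c), and it holds because every $C_g$ with $g \in \mathrm{Aut}(\D)$ is bounded.
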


It is also interesting to decide whether $\mathrm{Hol}(\D,\Omega) \subset \mathcal{B}_v$ for those typical weights $v$ satisfying
$$\limsup_{r \to 1^-}\dfrac{v(r)}{1-r} = +\infty.$$
This is partly covered by the following result.
\begin{lemma}
\label{lemma:bloch-no-domains}
Let $v$ be a typical weight. Assume that
$$\int_0^1\dfrac{dr}{v(r)} < + \infty.$$
Then, every function in $\mathcal{B}_v$ has a continuous extension to the closed unit disk $\overline{\D}$. In particular, $\mathrm{Hol}(\D,\Omega) \not\subset \mathcal{B}_v$ for every domain $\Omega \subset \C$.
\begin{proof}
Let $f \in \mathcal{B}_v$. To show that $f$ has a continuous extension to the closed unit disk it is enough to show that for all $\varepsilon > 0$ there exists $r_0 \in (0,1)$ such that
$$\abs{f(r_1 e^{i\theta})- f(r_2 e^{i\theta})} < \varepsilon$$
for all $\theta \in \R$ and for  all $r_0<r_1<r_2<1$. But this follows immediately from the integrability of the continuous function $1/v$ because
$$\abs{f(r_1 e^{i\theta})- f(r_2 e^{i\theta})} = \abs{\int_{r_1}^{r_2}f'(se^{i\theta})e^{i\theta}ds} \leq \sup_{z \in \D}\abs{f'(z)}v(z) \int_{r_1}^{r_2}\dfrac{1}{v(s)}ds.$$
This means that $f$ has a continuous extension to the closed unit disk. In particular, $\mathcal{B}_v \subsetneq \mathrm{H}^{\infty}$.

Now, let $f \in \mathrm{H}^{\infty} \setminus \mathcal{B}_v$ and let $\Omega \subset \C$ be a domain. Since $\Omega$ is an open set, there exist $\alpha,\beta \in \C$ be such that $g = \alpha f+\beta \in \mathrm{Hol}(\D,\Omega)$. Notice that $g \in \mathrm{H}^{\infty} \setminus \mathcal{B}_v$. As we wanted to conclude, this means that $\mathrm{Hol}(\D,\Omega) \not\subset \mathcal{B}_v$.
\end{proof}
\end{lemma}

As before, we are particularly interested in the case of the typical weights $v_{\alpha}$, $\alpha > 0$. Since $\mathcal{B}_{v_{\alpha}}$ is an increasing chain of spaces, mirroring the definition of the growth number of a domain, let us define the Bloch number of a domain $\Omega \subset \C$ as
$$\bloch(\Omega) = \inf(\{\alpha > 0 : \mathrm{Hol}(\D,\Omega) \subset \mathcal{B}_{v_{\alpha}}\}),$$
where we use the convention that $\inf(\emptyset) = +\infty$.

Recall that the weights $v_{\alpha}$, $\alpha > 0$, are essential. In fact, they also satisfy \eqref{eq:Lusky}. In particular, by Theorem \ref{thm:Lusky}, this means that $\mathcal{B}_{v_{\alpha}} = \mathrm{H}^{\infty}_{v_{\alpha-1}}$ for all $\alpha > 1$ (see \cite[Proposition 7]{Zhu} for a more elementary proof). For $0 < \alpha < 1$, our study is covered by Lemma \ref{lemma:bloch-no-domains}. In fact, as originally noticed by Hardy and Littlewood, the spaces $\mathcal{B}_{v_{\alpha}}$ can be identified as certain analytic Lipschitz spaces (see \cite[Theorem 5.1]{Duren-Hp}). This distinction justifies the following result.

\begin{corollary}
Let $\Omega \subset \C$ be a domain. Then, $\bloch(\Omega) = 1 + \g(\Omega)$.
\begin{proof}
Let $0 < \alpha < 1$. By Lemma \ref{lemma:bloch-no-domains}, we have that $\mathrm{Hol}(\D,\Omega) \not\subset \mathcal{B}_{v_{\alpha}}$. This means that $\bloch(\Omega) \geq 1$. But then, the result follows at once from Theorem \ref{thm:Lusky}, since $\mathcal{B}_{v_{\alpha}} = \mathrm{H}^{\infty}_{v_{\alpha-1}}$.
\end{proof}
\end{corollary}

This means that Theorem \ref{thm:growth-number}, Theorem \ref{thm:examples}, Lemma \ref{lemma:growth-bounded-complement}, and Lemma \ref{lemma:growth-examples-simpconn} holds for the spaces $\mathcal{B}_{v_{\alpha}}$, $\alpha > 0$, with obvious modifications.

\section{Some final examples regarding the Bloch space}
\label{sec:final-example}
Recall that $\mathcal{B} \subset \mathrm{H}^{\infty}_v$ for the logarithmic weight $v$ given by
\begin{equation}
\label{eq:v-log}
v(z) = \left[1+\log\left(\dfrac{1}{1-\abs{z}}\right)\right]^{-1}, \qquad z \in \D,
\end{equation}
see e.g. \cite[Proposition 7.5.1]{Vukotic}. However, such spaces are quite different in the following sense.

\begin{proposition}
\label{prop:Bloch-Hv}
Let $v$ be as in \eqref{eq:v-log}. There exists a simply connected domain $\Omega \subset \C$ such that $\mathrm{Hol}(\D,\Omega) \subset \mathrm{H}^{\infty}_v$ but $\mathrm{Hol}(\D,\Omega) \not\subset \mathcal{B}$.
\begin{proof}
The weight $v$ is typical. By \cite[Examples 1.7.(d)]{BBT} we also see that $v$ is equivalent to an essential weight, and so $v$ is also essential. Moreover, it also satisfies \eqref{eq:continuity-wtv}. Therefore, it follows from Theorem \ref{thm:univconmap-Hinftyv} that a hyperbolic domain $\Omega \subset \C$ with $0 \in \Omega$ satisfies $\mathrm{Hol}(\D,\Omega) \subset \mathrm{H}^{\infty}_v$ if and only if
\begin{equation}
\label{eq:sup-c1}
\sup_{w \in \Omega}\dfrac{\abs{w}}{1+\log\left(\dfrac{1}{2}\left(e^{d_{\Omega}(0,w)}+1\right)\right)} < +\infty.
\end{equation}
The function in \eqref{eq:sup-c1} is bounded in a neighborhood of $0$, and so \eqref{eq:sup-c1} is equivalent to 
\begin{equation}
\label{eq:sup-c2}
\sup_{\substack{w \in \Omega \\ \abs{w} > \varepsilon}}\dfrac{\abs{w}}{1+\log\left(\dfrac{1}{2}\left(e^{d_{\Omega}(0,w)}+1\right)\right)} < +\infty
\end{equation}
for some (hence, all) $\varepsilon > 0$. Also, $\lim_{x \to +\infty}\frac{1}{x}\left[1+\log\left(\frac{1}{2}\left(e^{x}+1\right)\right)\right] = 1$. Then, \eqref{eq:sup-c2} is equivalent to
\begin{equation}
\label{eq:condition-B-log}
\sup_{\substack{w \in \Omega \\ \abs{w} > \varepsilon}} \dfrac{\abs{w}}{d_{\Omega}(0,w)} < + \infty,
\end{equation}
for some (hence, all) $\varepsilon > 0$. Therefore, using Theorem \ref{thm:classical-Bloch}, the result follows if we can find a simply connected domain $\Omega \subsetneq \C$ with $0 \in \Omega$ satisfying \eqref{eq:condition-B-log} but containing disks of arbitrarily large radius.

To prove the existence of such a domain, let $L_n,l_n > 1$ be two non-decreasing sequences. Consider
$$S_1 = L_1, \qquad S_{n+1} = S_n + l_n + L_{n+1}, \qquad n \in \N.$$
We will work with the domain $\Omega \subset \C$ given by 
$$\Omega = \bigcup_{n = 1}^{\infty}X_n \cup \{z = x+iy \in \C : x > -1/2, \, \abs{y} < 1/2\},$$
where
$$X_n = \{z = x+iy \in \C : S_n < x < S_n+l_n, \, \abs{y} < l_n/2\}, \qquad n \in \N.$$
The construction is illustrated in Figure \ref{fig:domain-b-log} for clarity. Notice that $\Omega$ is the union of squares defined by the variables $L_n$ and $l_n$ and a half-strip. If we impose that $l_n \to +\infty$, as $n \to \infty$, then by Theorem \ref{thm:classical-Bloch} we have that $\mathrm{Hol}(\D,\Omega) \not\subset \mathcal{B}$. We claim that we can find $L_n,l_n > 0$ with $\lim_{n \to \infty}l_n = +\infty$ so that \eqref{eq:condition-B-log} holds.

\begin{figure}[h]
\centering
\begin{tikzpicture}[scale=1]
\newcommand{\x}{{0,1,2,3.5,5,7.5,9.5,10}}
\newcommand{\w}{{0.25,0.25,0.25,0.25}}
\fstbox{\x[0]}{\w[0]}
\fill (0,0) circle (1pt);
\xbanda{\x[0]}{\x[1]}{\w[0]}
\xbox{\x[1]}{\x[2]}{\w[0]}{\w[1]}
\xbanda{\x[2]}{\x[3]}{\w[1]}
\xbox{\x[3]}{\x[4]}{\w[1]}{\w[2]}
\xbanda{\x[4]}{\x[5]}{\w[2]}
\xbox{\x[5]}{\x[6]}{\w[2]}{\w[3]}
\xbanda{\x[6]}{\x[7]}{\w[3]}
\foreach \k in {0,...,2}{
\draw[dashed] (\x[2*\k],---\w[\k]) -- (\x[2*\k],-2);
\draw[dashed] (\x[2*\k+1],---\w[\k]) -- (\x[2*\k+1],-2);
}
\draw[dashed] (\x[6],---\w[3]) -- (\x[6],-2);
\foreach \k in {0,...,5}{
\draw[<->] (\x[\k],-2) -- (\x[\k+1],-2);
}
\foreach \k in {1,2,3}{ 
\node at ({\x[\the\numexpr\k+\k-2\relax]/2+\x[\the\numexpr\k+\k-1\relax]/2},-2.5) {$L_{\k}$};
}
\foreach \k in {1,2,3}{ 
\node at ({\x[\the\numexpr\k+\k-1\relax]/2+\x[\the\numexpr\k+\k\relax]/2},-2.5) {$l_{\k}$};
}

\end{tikzpicture}
\caption{The domain $\Omega$ for some choice of $L_n,l_n > 0$. The point inside the left-most rectangle represents the origin.}
\label{fig:domain-b-log}
\end{figure}

If we impose that $l_n \leq L_n$ for all $n \in \N$, then $\abs{\arg(z)} \leq \pi/4$ for all $z \in \Omega$ with $\mathrm{Re}(z) > L_1$. Therefore, \eqref{eq:condition-B-log} is equivalent to
$$\sup_{\substack{w \in \Omega \\ \mathrm{Re}(z) > L_1}} \dfrac{\mathrm{Re}(w)}{d_{\Omega}(0,w)} < + \infty.$$
We will show that, under some conditions for $L_n$ and $l_n$, the latter condition holds. For this purpose, we will simply use that $\lambda_{\Omega}(z) \geq 1$ for all $z \in \Omega \setminus \bigcup_{n \in \N} X_n$ (see Theorem \ref{thm:estimate-density-simpcon} and notice that $\mathrm{dist}(z,\partial\Omega) \leq 1/2$). In order to do so, we proceed by induction.

\textit{Initial step}. Let $z \in \Omega$ with $0 \leq \mathrm{Re}(z) \leq S_1$. Then,
$$d_{\Omega}(0,z) = \int_{\gamma}\lambda_{\Omega}(w) \, |dw| \geq \abs{z} \geq \mathrm{Re}(z),$$
where $\gamma$ is a hyperbolic geodesic joining $0$ and $z$, and where we have applied that $\lambda_{\Omega}(w) \geq 1$ for every $w \in \Omega$ with $0 \leq \mathrm{Re}(w) \leq S_1$. Take $C_1 = 1$.

\textit{Induction step}. Let $N \in \N$. Assume that we have constructed $C_N \in (0,1]$ so that $d_{\Omega}(0,z) \geq C_N\mathrm{Re}(z)$ for all $z \in \Omega$ with $0 \leq \mathrm{Re}(z) \leq S_N$. We will prove that $d_{\Omega}(0,z) \geq C_{N+1}\mathrm{Re}(z)$ for all $z \in \Omega$ with $0 \leq \mathrm{Re}(z) \leq S_{N+1}$,
where
$$C_{N+1} := C_N\dfrac{S_N}{S_N+l_N}.$$
We do so in two complementary cases.

\textit{Case 1}. Let $z \in \Omega$ with $S_N \leq \mathrm{Re}(z) \leq S_N + l_N$. Notice that the hyperbolic geodesic joining $0$ and $z$ intersects $I_{S_N}$, where we have denoted $I_w = \{z \in \Omega : \mathrm{Re}(z) = \mathrm{Re}(w)\}$. Then,
$$\dfrac{d_{\Omega}(0,z)}{\mathrm{Re}(z)} \geq \dfrac{d_{\Omega}(0,I_{S_N})}{S_N+l_N} \geq C_N\dfrac{S_N}{S_N+l_N} = C_{N+1}.$$

\textit{Case 2}. Let $z \in \Omega$ with $S_N+l_N \leq \mathrm{Re}(z) \leq S_{N+1}$. Notice that the hyperbolic geodesic joining $0$ and $z$ intersects $I_{S_N+l_N}$. Then,
\begin{align*}
\dfrac{d_{\Omega}(0,z)}{\mathrm{Re}(z)} & \geq \dfrac{d_{\Omega}(0,I_{S_N+l_N})+d_{\Omega}(I_{S_N+l_N},I_z)}{\mathrm{Re}(z)} \\
& \geq \dfrac{C_{N+1}(S_N+l_N)+\mathrm{Re}(z)-S_N-l_N}{\mathrm{Re}(z)} \geq C_{N+1}
\end{align*}
where we have used Case 1, $\lambda_{\Omega}(w) \geq 1$ for every $w \in \Omega$ with $S_N+l_N \leq \mathrm{Re}(w) \leq S_{N+1}$, and $C_{N+1} \leq 1$.

All in all, we see that $d_{\Omega}(0,z) \geq C_{N+1}\mathrm{Re}(z)$ for all $z \in \Omega$ with $0 \leq \mathrm{Re}(z) \leq S_{N+1}$, where
$$C_{N+1} = C_N\dfrac{S_N}{S_N+l_N}.$$

Thanks to this inductive argument, we conclude that \eqref{eq:condition-B-log} holds if we choose $L_n,l_n > 0$ such that
$$\lim_{N \to +\infty}C_N = \prod_{n = 1}^{\infty}\dfrac{S_n}{S_n+l_n} = \prod_{n=1}^{\infty}\left(1-\dfrac{l_n}{l_n+S_n}\right) > 0.$$

But, for instance, one could put
$$l_n = n+1, \qquad L_n = (n+1)n^2, \qquad n \in \N.$$
In that case,
$$\sum_{n = 1}^{\infty}\dfrac{l_n}{l_n+S_n} \leq \sum_{n = 1}^{\infty}\dfrac{l_n}{l_n+L_n} = \sum_{n = 1}^{\infty}\dfrac{1}{n^2+1} < +\infty,$$
and so \eqref{eq:condition-B-log} holds.
\end{proof}
\end{proposition}

In fact, varying the height of the rectangles in Figure \ref{fig:domain-b-log}, we can prove the following general result.
\begin{proposition}
Let $v$ be a typical weight for which all composition operators are bounded on $\mathrm{H}^{\infty}_v$. Then, there exists a simply connected domain $\Omega \subset \C$ such that $\mathrm{Hol}(\D,\Omega) \subset \mathrm{H}^{\infty}_v$ but $\mathrm{Hol}(\D,\Omega) \not\subset \mathcal{B}$.
\begin{proof}
First of all, by choosing an equivalent weight, we can always assume that $v$ is strictly decreasing in $[0,1)$ and that $v \in \mathcal{C}^1([0,1))$.

Now, let $\{L_n\}$ and $\{l_n\}$ be non-decreasing sequences in $(1,+\infty)$, and let $\{h_n\}$ be a non-increasing sequence in $(0,1/2]$. Consider
$$S_1 = L_1, \qquad S_{n+1} = S_n + l_n + L_{n+1}, \qquad n \in \N.$$
We will work with the domain $\Omega \subset \C$ given by

$$\Omega = \bigcup_{n = 0}^{\infty}(X_n \cup Y_n),$$
where
$$X_n = \{z = x+iy \in \C : S_n < x < S_n+l_n, \, \abs{y} < l_n/2\}, \qquad n \in \N,$$
$$Y_1 = \{z = x+iy \in \C : -1/2 < x \leq S_1, \, \abs{y} < h_1/2\},$$
and
$$Y_{n+1} = \{z = x+iy \in \C : S_n+l_n \leq x \leq S_{n+1}, \, \abs{y} < h_{n+1}/2\}, \qquad n \in \N.$$

We refer to Figure \ref{fig:domain-B-Hinftyv} to ease the exposition of the construction. Notice that $\Omega$ is the union of squares and rectangles defined by the original variables $L_n$, $l_n$ and $h_n$.

\begin{figure}[h]
\centering
\begin{tikzpicture}[scale=1]
\newcommand{\x}{{0,1,2,3.5,5,7.5,9.5,10}}
\newcommand{\w}{{0.25,0.17,0.1,0.05}}
\fstbox{\x[0]}{\w[0]}
\fill (0,0) circle (1pt);
\xbanda{\x[0]}{\x[1]}{\w[0]}
\xbox{\x[1]}{\x[2]}{\w[0]}{\w[1]}
\xbanda{\x[2]}{\x[3]}{\w[1]}
\xbox{\x[3]}{\x[4]}{\w[1]}{\w[2]}
\xbanda{\x[4]}{\x[5]}{\w[2]}
\xbox{\x[5]}{\x[6]}{\w[2]}{\w[3]}
\xbanda{\x[6]}{\x[7]}{\w[3]}
\foreach \k in {0,...,2}{
\draw[dashed] (\x[2*\k],---\w[\k]) -- (\x[2*\k],-2);
\draw[dashed] (\x[2*\k+1],---\w[\k]) -- (\x[2*\k+1],-2);
}
\draw[dashed] (\x[6],---\w[3]) -- (\x[6],-2);
\foreach \k in {0,...,5}{
\draw[<->] (\x[\k],-2) -- (\x[\k+1],-2);
}
\foreach \k in {1,2,3}{ 
\node at ({\x[\the\numexpr\k+\k-2\relax]/2+\x[\the\numexpr\k+\k-1\relax]/2},-2.5) {$L_{\k}$};
}
\foreach \k in {1,2,3}{ 
\node at ({\x[\the\numexpr\k+\k-1\relax]/2+\x[\the\numexpr\k+\k\relax]/2},-2.5) {$l_{\k}$};
}
\end{tikzpicture}
\caption{The domain $\Omega$ for some choice of $L_n,l_n,h_n > 0$. The point inside the left-most rectangle represents the origin.}
\label{fig:domain-B-Hinftyv}
\end{figure}

As in Proposition \ref{prop:Bloch-Hv}, by Theorems \ref{thm:univconmap-Hinftyv} and \ref{thm:classical-Bloch}, it is enough to find $C,l_n,L_n,h_n > 0$ so that $l_n \to +\infty$, as $n \to \infty$, and
\begin{equation}
\label{eq:cond-B-Hinftyv-box}
d_{\Omega}(0,w) \geq \phi(C\abs{w}), \qquad w \in \Omega,
\end{equation}
where $\phi = u^{-1}$ is the inverse function of $u \colon (0,+\infty) \to (0,+\infty)$, which is given by
$$u(d)\,v\bigl({\textstyle\frac{e^d-1}{e^d+1}}\bigr) = 1, \qquad d > 0.$$
It will be useful to notice that, using the properties of $v$, we have that $\phi$ is increasing and $\phi \in \mathcal{C}^1((0,+\infty))$.

If we choose $l_n \leq L_n$ for all $n \in \N$, then $\abs{\arg(z)} \leq \pi/4$ for all $z \in \Omega$ with $\mathrm{Re}(z) > L_1$. In particular, we see that \eqref{eq:cond-B-Hinftyv-box} holds if and only if there exists $C' > 0$ such that
\begin{equation}
\label{eq:cond-B-Hinftyv-box-2}
d_{\Omega}(0,w) \geq \phi(C' \, \mathrm{Re}(w)), \qquad w \in \Omega, \, \mathrm{Re}(w) > L_1.
\end{equation}
Under some conditions for $l_n$, $L_n$, and $h_n$, we will prove by induction that \eqref{eq:cond-B-Hinftyv-box-2} holds.

\textit{Initial step}. Let $z \in \Omega$ with $0 \leq \mathrm{Re}(z) \leq S_1$. Since $v$ is typical, notice that
$$\mathrm{Re}(z)v\left(\dfrac{e^{d_{\Omega}(0,z)}-1}{e^{d_{\Omega}(0,z)}+1}\right) \leq S_1v(0).$$
In other words, it holds that $u(d_{\Omega}(0,z)) \geq \mathrm{Re}(z)/(S_1v(0)) \geq C_1\mathrm{Re}(z)$, where we have taken $C_1 = \min(\{1,1/(S_1v(0))\})$. That is,
$$d_{\Omega}(0,z) \geq \phi(C_1\mathrm{Re}(z)).$$

\textit{Induction step}. Let $N \in \N$. Assume that there exists $C_N > 0$ so that $d_{\Omega}(0,z) \geq \phi(C_N\mathrm{Re}(z))$ for all $z \in \Omega$ with $0 \leq \mathrm{Re}(z) \leq S_N$. We will prove that $d_{\Omega}(0,z) \geq \phi(C_{N+1}\mathrm{Re}(z))$ for all $z \in \Omega$ with $0 \leq \mathrm{Re}(z) \leq S_{N+1}$,
where
$$C_{N+1} := C_N\dfrac{S_N}{S_N+l_N}.$$
We do so in two complementary cases.

\textit{Case 1}. Let $z \in \Omega$ with $S_N \leq \mathrm{Re}(z) \leq S_N + l_N$. Notice that the hyperbolic geodesic joining $0$ and $z$ intersects $I_{S_N}$, where we have denoted $I_w = \{z \in \Omega : \mathrm{Re}(z) = \mathrm{Re}(w)\}$. Then,
$$d_{\Omega}(0,z) \geq d_{\Omega}(0,I_{S_N}) \geq \phi(C_NS_N) \geq \phi(C_{N+1}\mathrm{Re}(z)).$$

\textit{Case 2}. Let $z \in \Omega$ with $S_N+l_N \leq \mathrm{Re}(z) \leq S_{N+1}$. Notice that the hyperbolic geodesic joining $0$ and $z$ intersects $I_{S_N+l_N}$. Then,
\begin{align*}
d_{\Omega}(0,z) & \geq d_{\Omega}(0,I_{S_N+l_N})+d_{\Omega}(I_{S_N+l_N},z) \\
& \geq \phi(C_{N+1}(S_N+l_N)) + \dfrac{1}{h_{N+1}}(\mathrm{Re}(z)-S_N-l_N).
\end{align*}
where we have used Case 1 and that $\lambda_{\Omega}(w) \geq 1/h_{N+1}$ for every $w \in \Omega$ with $S_N+l_N \leq \mathrm{Re}(w) \leq S_{N+1}$ (see Theorem \ref{thm:estimate-density-simpcon} and notice that $\mathrm{dist}(w,\partial\Omega) \leq h_{N+1}/2$ in this case). In particular, using the regularity of $\phi$, we see that
\begin{align*}
\phi(C_{N+1}\mathrm{Re}(z)) - \phi(C_{N+1}(S_N+l_N)) & = \int_{C_{N+1}(S_N+l_N)}^{C_{N+1}\mathrm{Re}(z)}\phi'(t)dt \\
& \leq M_NC_{N+1}(\mathrm{Re}(z)-S_N-l_N),
\end{align*}
where
$$M_N = \sup(\{\phi'(t) : C_{N+1}(S_N+l_N) \leq t \leq C_{N+1}S_{N+1}\}) < +\infty.$$
Therefore, if we take $h_{N+1} \leq 1/(M_NC_{N+1})$, we have that
$$d_{\Omega}(0,z) \geq \phi(C_{N+1}\mathrm{Re}(z))$$
for all $z \in \Omega$ with $S_N+l_N \leq \mathrm{Re}(z) \leq S_{N+1}$.

Once more, we conclude that \eqref{eq:cond-B-Hinftyv-box} holds if we choose all $L_n$ and all $l_n$ so that
$$\lim_{N \to +\infty}C_N = C_1\prod_{n = 1}^{\infty}\dfrac{S_n}{S_n+l_n} > 0,$$
and we later choose all $h_n$ so that $h_{N+1} \leq 1/(M_NC_{N+1})$. Then, we can conclude as in the proof of Proposition \ref{prop:Bloch-Hv}.
\end{proof}
\end{proposition}

\end{document}